\theoremstyle{plain}
\newtheorem{thm}{Theorem}[section]
\newtheorem{cor}[thm]{Corollary}
\newtheorem{lem}[thm]{Lemma}
\theoremstyle{definition}
\newtheorem{defn}{Definition}[section]
\newtheorem{Assump}{Assumption}[section]
\theoremstyle{remark}
\newtheorem{rem}{Remark}[section]
\begin{document}
\title{{\Large\bf {$L^p$-solutions of QSDE driven by Fermion fields with nonlocal conditions under non-Lipschitz coefficients}}
}
\author{{\normalsize Guangdong Jing,\thanks{Department of Mathematics, 
Beijing Institute of Technology,    Beijing 100081, China.} \quad  Penghui Wang,\thanks{School of Mathematics, Shandong University, Jinan, 250100, China.} \quad  Shan Wang,\thanks{School of Mathematics, Shandong University, Jinan, 250100, China.} \thanks{E-mail addresses: 201511240@mail.sdu.edu.cn(G.Jing), phwang@sdu.edu.cn(P.Wang), 202020244@mail.sdu.edu.cn(S.Wang).}   }   }
\date{}
\maketitle
\begin{minipage}{14cm} {\bf Abstract.}
The main purpose of this paper is to obtain the existence and uniqueness of $L^p$-solution to quantum stochastic differential equation
driven by Fermion fields with nonlocal conditions in the case of non-Lipschitz coefficients for $p>2$. 
The key to our technique is to make use of the Burkholder-Gundy inequality given by Pisier and Xu and Minkowski-type inequality to iterate instead of the fixed point theorem commonly used for nonlocal problems.
Moreover, we also obtain the self-adjointness of the $L^p$-solution which is important in the study of optimal control problems.

\noindent{\bf 2020 AMS Subject Classification:}  46L51, 47J25, 60H10, 81S25.

\noindent{\bf Keywords.}\ Fermion feilds, nonlocal conditions, Burkholder-Gundy inequality, Bihari inequality.
\end{minipage}
 \maketitle
\numberwithin{equation}{section}
\newtheorem{theorem}{Theorem}[section]
\newtheorem{lemma}[theorem]{Lemma}
\newtheorem{proposition}[theorem]{Proposition}
\newtheorem{corollary}[theorem]{Corollary}
\newtheorem{remark}[theorem]{Remark}


\section{Introduction}
\indent\indent
In the present paper, we shall consider the following quantum stochastic differential equation (QSDE for short) introduced by Barnett, Streater and Wilde \cite{BSW2}:
\begin{equation}\label{SDE}
dX_t=F(X_t,t)dW_t+dW_tG(X_t,t)+H(X_t,t)dt,
\end{equation}
driven by Fermion fields which is closely related to the quantum noise, quantum fields etc. 
The $L^2$-solution of the QSDE \eqref{SDE} under Lipschitz coefficients was studied by Barnett, Streater and Wilde \cite{BSW2,BSW3} and  Applebauma-Hudson-Lindsay \cite{AH1,AH2,LJ} using It\^{o}-isometry and It\^{o}-formula, respectively. Fermion fields and Boson fields are the two most important quantum fields. The QSDE driven by Boson fields have also been studied extensively \cite{BSW4,HL,HP,LW}. Moreover, these two classes of QSDEs can be unified understood as a same framework of the Hudson and Parthasarathy's quantum stochastic calculus\cite{RL,K} in noncommutative spaces.

The noncommutative $L^p$-space and associated harmonic analysis have been deeply studied in \cite{CK,HM,SIE1,SIE0,SIE,SIE2} and references therein. In particular,
martingale inequalities in the non-commutative case have been greatly developed since Pisier and Xu \cite{PX} proved the analogy of the classical Burkholder-Gundy inequality for non-commutative martingales, where they defined the noncommutative $H^p$ martingale space by introducing the row and column square functions.
Subsequently, combined Khintchine inequalities of operator values obtained by Lust-Piquard \cite{LF}, Junge, Randrianantoanina and Xu et al generalized almost all the martingale inequalities such as Doob maximal inequality and Rosenthal inequalities so on of classical martingale theory to the noncommutative case \cite{J,JX1,JX2,JX3,JX4,RN1,RN2}, which lays a foundation for the study of quantum stochastic analysis.


Inspired by Burkholder-Gundy inequality, it is natural to solve the QSDE \eqref{SDE} in non-commutative $L^p$-spaces. Problems with nonlocal conditions are more applicable to real life problems than problems with traditional local conditions. Analogously, we study the $L^p$-solution and the basic properties of solution to QSDE with nonlocal initial conditions for $p>2$.
To state our results, we give some assumptions on the QSDE. Some basic notations of noncommutative filtration $\{\mathscr C_t\}_{t\geq0}$ will be introduced in Section 2.

\begin{defn}
A map $X:~\mathbb{R}^+\rightarrow L^p(\mathscr{C})$ is said to be adapted if $X_t\in L^p(\mathscr{C}_t)$ for each $t\in \mathbb{R}^+$. A map $F:~L^p(\mathscr{C})\times\mathbb{R}^+\rightarrow L^p(\mathscr{C})$ is said to be adapted if $F(u, t)\in L^p(\mathscr{C}_t)$, for any $t\in \mathbb{R}^+$ and $u\in L^p(\mathscr{C}_t)$.
\end{defn}

It is easy to check that if $X: \mathbb{R}^+\rightarrow L^p(\mathscr{C})$ and $F:L^p(\mathscr{C})\times\mathbb{R}^+\rightarrow L^p(\mathscr{C})$ are both adapted, so is the map $t\mapsto F(X_t,t)$.

In the rest of this section, we consider the following equation with nonlocal conditions in the interval $[0, T]$ for some fixed $T > 0$,
\begin{equation}\label{nonlocal condition-QSDE}
\left\{
\begin{aligned}
  dX_t&=F(X_t,t)dW_t+dW_tG(X_t,t)+H(X_t,t)dt,\ {\rm in}\ [t_0,T],\\
  X_{t_0}&=Z+R(X),
\end{aligned}
\right.
\end{equation}
where $F(\cdot,\cdot),\ G(\cdot,\cdot),\ H(\cdot,\cdot):L^p(\mathscr{C})\times[0,T]\rightarrow L^p(\mathscr{C})$, $R(\cdot):L^p(\mathscr{C})\to L^p(\mathscr{C})$ constitutes the nonlocal condition, and $X_{t_0}\in L^p(\mathscr{C}_{t_0})$ for fixed $p>2$.
\begin{defn}
A stochastic process $X(\cdot):[0,T]\to L^p(\mathscr{C})$ is called a solution to the equation \eqref{nonlocal condition-QSDE} if it satisfies
$$X_t=Z+R(X_t)+\int_{t_0}^{t}F(X_s,s)dW_s+\int_{t_0}^{t}dW_s G(X_s,s)+\int_{t_0}^{t}H(X_s,s)ds,\  \textrm{a.s.\ in}\ [t_0,T].$$
\end{defn}
Throughout the paper, we shall make use of the following conditions:
\begin{Assump}\label{Assump}
$F(\cdot,\cdot),\ G(\cdot,\cdot),\ H(\cdot,\cdot)$: $L^p(\mathscr{C})\times[0, T]\rightarrow L^p(\mathscr{C})$ are operator-valued functions such that
\begin{description}
  \item[(A1)] $F(\cdot,\cdot),\ G(\cdot,\cdot),\ H(\cdot,\cdot):L^p(\mathscr{C})\times\mathbb{R}^+\rightarrow L^p(\mathscr{C})$ are adapted.
  \item[(A2)] For any $x\in L^p(\mathscr{C})$, $F(x,\cdot),\ G(x,\cdot),\ H(x,\cdot):[0,T]\to L^p(\mathscr{C})$ are continuous a.s.
 \item[(A3)] Osgood condition:\  For any $x_1,\ x_2\in L^p(\mathscr{C})$ and a.e. $t\in [0,T]$,
$$
   \|F(x_1, t)-F(x_2, t)\|_p^2+\|G(x_1, t)-G(x_2, t)\|_p^2+ \|H(x_1, t)-H(x_2, t)\|_p^2\leq \rho(\|x_1-x_2\|_p^2) .\\
$$
where $\rho: \mathbb{R}^+\to\mathbb{R}^+$ is a continuous non-decreasing function with $\rho(0)=0$, $\rho(r)>0$ for $r>0$, such that $\int_{0^+}\frac{dr}{\rho(r)}=+\infty$.
  \item[(A4)] $R$ is continuous and adapted, and there exists a constant $0<C(R)<1$ such that
\begin{equation}\label{J-Lipschitz}
\|R(x_1)-R(x_2)\|_p\leq C(R)\|x_1-x_2\|_p, \  \forall\ x_1,\ x_2\in L^p(\mathscr{C}).
\end{equation}
\end{description}
\end{Assump}
\begin{thm}Under the assumptions (A1)-(A4), for $p>2$, there is a unique continuous adapted $L^p$-solution
$\{X_t\}_{t\geq t_0 }$ to the following quantum stochastic integral equation with nonlocal initial value
\begin{equation}\label{Iteration}
X_t=Z+R(X_t)+\int_{t_0}^{t}F(X_s,s)dW_s+\int_{t_0}^{t}dW_s G(X_s,s)+\int_{t_0}^{t}H(X_s,s)ds,\ {\rm{in}}\ [t_0, T ].
\end{equation}
\end{thm}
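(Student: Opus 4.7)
The strategy is a Picard iteration adapted to the non-Lipschitz setting, with the usual Gronwall step replaced by a Bihari-Osgood argument. On the Banach space of continuous adapted $L^p$-processes on $[t_0,T]$, set $X^0_t = Z$ and inductively
\begin{equation*}
X^{n+1}_t = Z + R(X^n_t) + \int_{t_0}^{t} F(X^n_s,s)\,dW_s + \int_{t_0}^{t} dW_s\, G(X^n_s,s) + \int_{t_0}^{t} H(X^n_s,s)\,ds.
\end{equation*}
Assumptions (A1), (A2), (A4) together with the adaptedness and continuity of the quantum stochastic integrals ensure each $X^{n+1}$ is continuous, adapted, and lies in $L^p(\mathscr{C}_t)$ for every $t$.

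\textbf{Core one-step estimate.} Subtract two successive iterates and take $L^p$-norms. Bound the two stochastic terms by the noncommutative Burkholder-Gundy inequality of Pisier-Xu, then apply the Minkowski-type inequality (valid for $p>2$) to dominate the $L^p$-norms of the row/column square functions by $\bigl(\int_{t_0}^t \|\cdot\|_p^2\,ds\bigr)^{1/2}$; bound the $ds$-term by Minkowski and Cauchy-Schwarz; and use the Osgood growth (A3) together with (A4). For any $\varepsilon>0$, this yields a constant $K_p(\varepsilon)$ (depending on $p$, $T$, and $\varepsilon$) such that
\begin{equation*}
\|X^{n+1}_t - X^n_t\|_p^2 \leq (1+\varepsilon)\, C(R)^2\,\|X^n_t - X^{n-1}_t\|_p^2 + K_p(\varepsilon) \int_{t_0}^{t} \rho\bigl(\|X^n_s - X^{n-1}_s\|_p^2\bigr)\,ds.
\end{equation*}
Since $C(R)<1$, choose $\varepsilon$ with $q := (1+\varepsilon)C(R)^2 < 1$; taking suprema over $[t_0,t]$ and exploiting monotonicity of $\rho$ gives, for $\phi_n(t) := \sup_{s \in [t_0,t]} \|X^{n+1}_s - X^n_s\|_p^2$,
\begin{equation*}
\phi_n(t) \leq \frac{K_p(\varepsilon)}{1-q} \int_{t_0}^{t} \rho(\phi_{n-1}(s))\,ds.
\end{equation*}

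\textbf{Convergence, existence, uniqueness.} Setting $\phi(t) := \limsup_{n\to\infty} \phi_n(t)$, Fatou's lemma and the continuity of $\rho$ yield $\phi(t) \leq K' \int_{t_0}^{t} \rho(\phi(s))\,ds$; the Osgood divergence $\int_{0^+} dr/\rho(r) = +\infty$ then forces $\phi \equiv 0$ via the Bihari inequality, so $\{X^n\}$ is Cauchy and converges to a continuous adapted $X$. Hypothesis (A2), (A4) and Burkholder-Gundy allow passage to the limit under every integral sign, so $X$ satisfies \eqref{Iteration}. Uniqueness follows by applying the same one-step estimate to the difference of two solutions and invoking Bihari once more. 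The principal obstacle throughout is the non-Lipschitz modulus $\rho$: a direct contraction argument is unavailable, so the nonlocal contraction $C(R)<1$ must be peeled off carefully (via the $(1+\varepsilon)$-Young device) before the Bihari/Osgood machinery can act cleanly on the residual $\rho$-integral, and the monotonicity of $\rho$ must be used at every stage to commute suprema with the modulus.
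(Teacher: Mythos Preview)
Your overall strategy (Picard iteration plus Bihari) is sound, but there is a genuine gap in the convergence step. From the one-step estimate you in fact obtain
\[
\phi_n(t)\;\le\; q\,\phi_{n-1}(t)\;+\;K\int_{t_0}^t\rho\bigl(\phi_{n-1}(s)\bigr)\,ds,\qquad q=(1+\varepsilon)C(R)^2<1;
\]
the displayed inequality $\phi_n(t)\le \frac{K}{1-q}\int_{t_0}^t\rho(\phi_{n-1}(s))\,ds$ does \emph{not} follow from this, because the term $q\phi_{n-1}$ on the right cannot be absorbed into $\phi_n$ on the left---they are different quantities. More seriously, even if you pass to $\phi:=\limsup_n\phi_n$ and deduce $\phi\equiv 0$ via Bihari, this only says that \emph{consecutive} differences $\|X^{n+1}-X^n\|_p$ tend to zero, which does not make $\{X^n\}$ Cauchy (consider partial sums of the harmonic series).

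The remedy, which is precisely what the paper does, is to track $u_{n,k}(t):=\sup_{s\le t}\|X^{n+k}_s-X^n_s\|_p^2$ for \emph{all} $k\ge1$, set $v_n(t):=\sup_k u_{n,k}(t)$, and show that $\alpha(t):=\limsup_n v_n(t)$ satisfies the Bihari inequality $\alpha(t)\le C\int_{t_0}^t\rho(\alpha(s))\,ds$; then $\alpha\equiv 0$ is exactly the Cauchy property in $C_{\mathbb A}(t_0,T;L^p(\mathscr C_T))$. Incidentally, the paper also sidesteps your $(1+\varepsilon)$-Young device altogether by defining the iteration \emph{implicitly} in the nonlocal term, namely
\[
X^{(n+1)}_t \;=\; Z + R\bigl(X^{(n+1)}_t\bigr) + \int_{t_0}^t F(X^{(n)}_s,s)\,dW_s+\cdots,
\]
which is well-posed at each step because $R$ is a strict contraction. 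With this choice the $R$-contribution on the right is $C(R)\|X^{(n+1)}-X^{(n)}\|_p$, which can simply be subtracted to yield a clean factor $(1-C(R))^{-1}$ with no $\varepsilon$-balancing. Your explicit iteration with the Young trick is a legitimate alternative, but it still requires the $(n+k,n)$ difference argument to conclude.
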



\begin{rem}
There are many efforts to study the solution to the QSDE (\ref{SDE}) by many mathematicians.
\begin{itemize}
\item[(1).] By using It\^{o}-isometry, Barnett, Streater and Wilde \cite{BSW2,BSW3} considered the $L^2$-solution to the QSDE \eqref{SDE}.
\item[(2).] The original study \cite{AH1} on the following QSDE 
 \begin{equation}\label{SDE-2}
 dX_t=F(X_t,t)dA_t+dA^*_t G(X_t,t)+H(X_t,t)dt,\ \textrm{in}\ [0,T],
\end{equation}
 is to consider the solution in the weak sense, i.e., $X_t$ is called the solution to the initial value problem of the QSDE, if
$$
X_t u=\left(Z+ \int_{0}^t F(X_s,s)dA_s+\int_0^t dA^*_s G(X_s,s)+\int_0^t H(X_s,s)ds\right)u,
$$
for any $u\in D(X_t)$, where $X_t$ can be considered as an unbounded operator densely defined on the Hilbert space $\Lambda(L^2(\mathbb R^+))$ with the domain $D(X_t)$.
\item[(3).] The martingale inequality and Burkholder-Gundy inequality of Pisier and Xu have been used by Dirksen \cite{D} to study the $L^p$-solution for $p>2$ to QSDE with respect to any normal $L^p$-martingale without the drift term $\int_0^t H(X_s,s)ds$. The reason why the Burkholder-Gundy inequality can be used directly is because, without the drift term $\int_0^t H(X_s,s)ds$, the solution of the QSDE is a martingale, and hence by the Lipschiz condition on ${\cal H}^p([0,t])$, one can have existence and uniqueness of the solution.
\item[(4)] Compared with the result of Dirksen \cite{D}, the QSDE \eqref{SDE} that we consider has the drift term. The reason why we can deal with the drift term is because we obtain the estimation in $L^2(0,T; L^p({\mathscr C}_T))$ with the help of Burkholder-Gundy inequality and Minkowski inequality.
\end{itemize}
\end{rem}



This paper is organized as follows. 
In Section 2, we review some fundamental notations and preliminaries on Fermion fields, and introduce several useful inequalities which are the main techniques of subsequent proof.
In section 3 and section 4, we obtain the existence, uniqueness and the stability of $L^p$-solution to the QSDE \eqref{SDE} and \eqref{SDE-2} with nonlocal conditions.
In section 5, we get the self-adjointness and the Markov property of the solution to QSDE.

\section{Preliminaries and Burkholder-Gundy inequality}
\indent\indent
In this section, we introduce the main techniques to solve problems later.
We first recall some concepts \cite{BSW1,BSW2,BSW3,PX,W,W2} necessary to the whole article.

Let $\mathscr{H}$ be a separable complex Hilbert space.
For any $z\in\mathscr{H}$, the creation operator $C(z):\Lambda_n(\mathscr{H})\rightarrow \Lambda_{n+1}(\mathscr{H})$ defined by $u\mapsto \sqrt{n+1}\ z\wedge$$u$, is a bounded operator on $\Lambda(\mathscr{H})$ with $\|C(z)\|=\|z\|$, where $\Lambda_0(\mathscr{H}):=\mathbb{C}$.
Meanwhile, define the annihilation operator $A(z)=C^*(z)$.
Then the antisymmetric Fock space $\Lambda(\mathscr{H}):=\oplus_{n=0}^\infty\Lambda_n(\mathscr{H})$ over $\mathscr{H}$ is a Hilbert space.
Moreover, define the fermion field $\Psi(z)$ on $\Lambda(\mathscr{H})$ by $\Psi(z):=C(z)+A(Jz)$, where $J: \mathscr{H}\rightarrow\mathscr{H}$ is the conjugation operator (i.e., $J$ is antilinear, antiunitary and $J^2=1$).
Denote by $\mathscr{C}$ the von Neumann algebra generated by the bounded operators $\{\Psi(z): z\in \mathscr{H}\}$. 
For the Fock vacauum $\mathds{1}\in\Lambda(\mathscr{H})$, define
\begin{equation}\label{m-state}
m(\cdot):=\langle\mathds{1}, \cdot\mathds{1}\rangle
\end{equation}
on $\mathscr{C}$. Obviously, $m$ is a normal faithful state on $\mathscr{C}$, and $(\Lambda(\mathscr{H}), \mathscr{C}, m)$ is a quantum probability space by \cite{MT}.

Now, let $\mathscr{H}=L^2(\mathbb R^+)$, and  $\mathscr{C}_t$ be the von Neumann subalgebra of $\mathscr{C}$ generated by $\{\Psi(u):u\in L^2(\mathbb R_+), {\rm~ess~supp}~u\subseteq [0,t]\},$
then $\{\mathscr{C}_t\}_{t\geq 0}$ is an increasing family of von Neumann subalgebras of $\mathscr{C}$ which is the noncommutative analogue of filtration in stochastic analysis.
Let$$W_t=\Psi(\chi_{[0,t]})=C(\chi_{[0,t]})+A(J\chi_{[0,t]}),\  t\geq 0,$$ then $\{W_t:t\in \mathbb{R}^+\}$ is a Fermion Brownian motion adapted to the family $\{\mathscr{C}_t: t\in \mathbb{R}^+\}$.
For any $1\leq p<\infty$, define the noncommutative $L^p$-norm on $\mathscr{C}$ by
 $$\|f\|_{p}:=m(|f|^p)^{1\over p}=\langle\mathds{1}, |f|^p\mathds{1}\rangle^{1\over p}$$
where $|f|=(f^*f)^{1\over 2}$, then $L^p(\mathscr{C},m)$ is the completion of $(\mathscr{C}, \|\cdot\|_p)$, which is the noncommutative $L^p$-space, abbreviated as $L^p(\mathscr{C})$. 
For any interval $[0,T]\subset\mathbb{R^+}$, set
\begin{align*}
 C_{\mathbb{A}}(0,T;L^p(\mathscr{C}_T)):=\Big\{x(\cdot):\ & [0,T]\to L^p(\mathscr{C}_T) \mid x(t)\in L^p(\mathscr{C}_t)\ \\
   & {\rm and}\ \lim_{s\to t}\|x(s)-x(t)\|_p=0,\ \forall\ s,\ t\in[0,T]\Big\}.
\end{align*}
It is easy to check that $C_{\mathbb{A}}(0,T;L^p(\mathscr{C}_T))$ is a Banach space with the norm given by
$$\|x\|_{C_{\mathbb{A}}(0,T;L^p(\mathscr{C}_T))}=\max_{t\in[0,T]}\|x(t)\|_p.$$


\begin{defn} An adapted $L^p$-processes $f$ on $[t_0,t]$ is said to be simple if it can be expressed as
\begin{equation}\label{Simple}
f=\sum_{k=0}^{n-1}f(t_k)\chi_{[t_{k}, t_{k+1})}
\end{equation}
on $[t_0,t]$ 
for $t_0\leq t_1\leq \cdots \leq t_n=t$ and $f(t_k)\in L^p(\mathscr{C}_{t_k})$ for all $0\leq k\leq n-1$.
\end{defn}

\noindent By \cite{BSW1}, 
the It\^{o}-Clifford integral of any simple adapted $L^p$-process with respect to Fermion  Brownian motion $W_t$ is defined as follows.
\begin{defn}
If $f=\sum\limits_{k}f(t_k)\chi_{[t_{k}, t_{k+1})}$ is a simple adapted $L^p$-processes on $[t_0,t]$, then the
It\^{o}-Clifford stochastic integral of $f$ over $[t_0, t]$ with respect to $W_t$ is
\begin{equation}\label{Right-stochatic integral}
\int_{t_0}^tf(s)dW_s=\sum_{k=0}^{n-1}f(t_k)(W_{t_{k+1}}-W_{t_{k}}).
\end{equation}
\end{defn}

For $p\geq 1$, let ${\cal S}^p_{\mathbb A}(\mathbb R^+)$ be the linear space of all simple adapted $L^p$-processes, i.e.
$$
{\cal S}^p_{\mathbb A}(\mathbb R^+):=\{f:\mathbb R^+\to L^p (\mathscr{C}),\ f\text{ is  simple and adapted}\}.
$$
Then ${\cal S}^p_{\mathbb A}([0,t])$ is subspace of ${\cal S}^p_{\mathbb A}(\mathbb R^+)$ whose processes vanish in $(t,\infty)$.
It is clear that $\int_{t_0}^tf(s)dW_s$ is a Clifford $L^p$-martingale for any $f\in {\cal S}^p_{\mathbb A}([0,t])$, i.e. $\mathbb{E}\left(\int_{t_0}^tf(\tau)dW_\tau\mid \mathscr{C}_s\right)=\int_{t_0}^sf(\tau)dW_\tau$ for any $t_0\leq s\leq t$. 
 For all $f\in {\cal S}^p_{\mathbb A}([0,t])$, let
\begin{equation*}
\left\|f\right\|_{\mathcal{H}^p([0,t])}:=\max\left\{\left\|\left(\int_0^t|f(s)|^2ds\right)^{\frac{1}{2}}\right\|_p ,\ \left\|\left(\int_0^t|f^*(s)|^2ds\right)^{\frac{1}{2}}\right\|_p\right\},
\end{equation*}
and denote by the non-commutative Hardy space $\mathcal{H}^p([0,t])$ the completion of ${\cal S}^p_{\mathbb A}([0,t])$ under the norm $\|\cdot\|_{\mathcal{H}^p([0,t])}$. For simplicity, we use $\mathcal{H}^p_{loc}(\mathbb{R}^+)$ to represent the set of stochastic processes $f: \mathbb{R}^+\to L^p(\mathscr{C})$ and $\chi_{[0,t]}f\in \mathcal{H}^p([0,t])$.
Moreover, we have the following the Burkholder-Gundy inequality \eqref{BDG-PX} of Clifford $L^p$-martingale first established in \cite{PX}.

\begin{lem}\cite[Theorem 4.1]{PX}\label{px_bdg}
Let $2\leq p<\infty$. Then, for any $f\in \mathcal{H}^p_{loc}(\mathbb{R}^+)$ and its It\^{o}-Clifford integral
\begin{equation*}
X_t=\int_0^tf(s)dW_s, \ t\geq 0,
\end{equation*}
it holds that
\begin{equation}\label{BDG-PX}
\alpha_p^{-1}\|f\|_{\mathcal{H}^{p}([0,t])}\leq\|X_t\|_p\leq\beta_p\|f\|_{\mathcal{H}^{p}([0,t])}, \ t\geq0,
\end{equation}
where $\alpha_p$ and $\beta_p$ are constant depend on $p$.
\end{lem}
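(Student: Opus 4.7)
The plan is to prove this Burkholder--Gundy inequality by reducing to the discrete noncommutative martingale setting and then identifying the discrete column/row square functions with the continuous $\mathcal{H}^p$ norms via the Clifford algebra relations of the Fermion field.

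First, since $\mathcal{H}^p([0,t])$ is by definition the completion of $\mathcal{S}^p_{\mathbb{A}}([0,t])$ under $\|\cdot\|_{\mathcal{H}^p([0,t])}$, I would establish the two-sided estimate first for simple adapted processes $f=\sum_{k=0}^{n-1}f(t_k)\chi_{[t_{k},t_{k+1})}$ and then extend by density. For such $f$, writing $d_k:=f(t_k)(W_{t_{k+1}}-W_{t_{k}})$, one has $X_t=\sum_{k=0}^{n-1}d_k$, and adaptedness ($f(t_k)\in L^p(\mathscr{C}_{t_k})$) combined with the martingale property of the Fermion Brownian motion yields $\mathbb{E}(d_k\mid\mathscr{C}_{t_k})=0$, so that the partial sums $(Y_j)_{j=0,\dots,n}=(\sum_{k<j}d_k)$ form a finite noncommutative $L^p$-martingale with respect to the discrete filtration $(\mathscr{C}_{t_j})$.

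Second, I would invoke the Pisier--Xu discrete noncommutative Burkholder--Gundy theorem, which for $p\geq 2$ gives
\begin{equation*}
\|X_t\|_p\asymp_p\max\Bigl\{\bigl\|(\textstyle\sum_{k}d_k^*d_k)^{1/2}\bigr\|_p,\ \bigl\|(\textstyle\sum_{k}d_kd_k^*)^{1/2}\bigr\|_p\Bigr\},
\end{equation*}
with constants $\alpha_p,\beta_p$ depending only on $p$. The proof of this discrete inequality itself proceeds via the noncommutative Khintchine inequalities of Lust-Piquard applied after tensoring the martingale differences with a Rademacher-type system, together with duality between the column and row Hardy spaces $H^p_c$ and $H^p_r$. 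The third step is to identify the discrete square functions with $\|f\|_{\mathcal{H}^p([0,t])}$: using the Clifford relation $(W_{t_{k+1}}-W_{t_k})^2=(t_{k+1}-t_k)\mathds{1}$, a direct computation under the conditional expectation onto $\mathscr{C}_{t_k}$ yields $\mathbb{E}_{t_k}(d_k^*d_k)=(t_{k+1}-t_k)|f(t_k)|^2$; summing over $k$ reproduces $\int_{0}^{t}|f(s)|^2ds$ on the diagonal, and Stein's noncommutative inequality shows the conditioned and unconditioned square functions are equivalent in $L^p$-norm. The row version follows symmetrically by applying the argument to the adjoint process.

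The main obstacle is the noncommutativity between the Fermion increments $\Delta W_k:=W_{t_{k+1}}-W_{t_k}$ and the coefficients $f(t_k)$: although $f(t_k)\in\mathscr{C}_{t_k}$, it does not commute with $\Delta W_k$ in general, so one cannot naively factor out $(\Delta W_k)^2=(t_{k+1}-t_k)\mathds{1}$ from $d_k^*d_k=\Delta W_k\,|f(t_k)|^2\,\Delta W_k$. Resolving this requires the $\mathbb{Z}_2$-parity decomposition of $\mathscr{C}$ coming from the Fermion grading, under which disjoint-time Fermion elements either commute or anticommute cleanly with $\Delta W_k$, and the conditional expectation $\mathbb{E}_{t_k}$ together with the trace-like vacuum state $m$ annihilates the cross terms of odd parity. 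Once the identification of discrete and continuous square functions is established for simple processes, standard approximation using the density of $\mathcal{S}^p_{\mathbb{A}}([0,t])$ in $\mathcal{H}^p([0,t])$ extends both bounds to arbitrary $f\in\mathcal{H}^p_{loc}(\mathbb{R}^+)$, completing the proof.
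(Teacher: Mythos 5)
You should first note that the paper does not prove this lemma at all: it is quoted verbatim as Theorem 4.1 of Pisier--Xu, so there is no internal proof to compare against. That said, your overall architecture --- prove the estimate for simple adapted $f$, observe that the partial sums of $d_k=f(t_k)(W_{t_{k+1}}-W_{t_k})$ form a finite noncommutative $L^p$-martingale (using that the conditional expectation is a $\mathscr{C}_{t_k}$-bimodule map and $\mathbb{E}(W_{t_{k+1}}-W_{t_k}\mid\mathscr{C}_{t_k})=0$), apply the discrete noncommutative Burkholder--Gundy inequality (Theorem 2.1 of Pisier--Xu), identify the discrete square functions with $\|f\|_{\mathcal{H}^p([0,t])}$ via the Clifford relations, and extend by density --- is exactly the route Pisier and Xu themselves take in deriving their Theorem 4.1 from Theorem 2.1, and it is the right one.

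The genuine problem is in your identification of the column square function. The correct computation is an exact operator identity with no conditioning: writing $\Delta W_k=W_{t_{k+1}}-W_{t_k}$ and decomposing $|f(t_k)|^2$ into its even and odd parts, Lemma \ref{exchange-Brownian-definite parity} lets you move one factor of $\Delta W_k$ past $|f(t_k)|^2$ at the cost of a sign on the odd part, and then $(\Delta W_k)^2=(t_{k+1}-t_k)\mathds{1}$ gives $d_k^*d_k=\Delta W_k\,|f(t_k)|^2\,\Delta W_k=(t_{k+1}-t_k)\,P(|f(t_k)|^2)$, where $P$ is the parity automorphism. The odd part is \emph{not} annihilated --- it flips sign --- so your claimed identity $\mathbb{E}_{t_k}(d_k^*d_k)=(t_{k+1}-t_k)|f(t_k)|^2$ is false in general; it is off by $P$. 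To conclude you must instead observe that $\sum_k d_k^*d_k=P\bigl(\int_0^t|f(s)|^2ds\bigr)$ and that $P$ is a trace-preserving $*$-automorphism, hence isometric on $L^{p/2}(\mathscr{C})$, which gives $\bigl\|(\sum_k d_k^*d_k)^{1/2}\bigr\|_p=\bigl\|(\int_0^t|f(s)|^2ds)^{1/2}\bigr\|_p$ exactly. Relatedly, your appeal to Stein's inequality to pass between conditioned and unconditioned square functions is both unnecessary and incorrect as stated: Stein's inequality is one-sided, and for $p>2$ the conditioned square function is \emph{not} equivalent to the unconditioned one for general martingales (this is precisely why the noncommutative Burkholder/Rosenthal inequalities carry an additional diagonal term). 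Here no conditioning is needed because the Clifford relation yields the identity on the nose. The row term has no parity issue at all, since $d_kd_k^*=f(t_k)(\Delta W_k)^2f(t_k)^*=(t_{k+1}-t_k)|f(t_k)^*|^2$ directly. With the identification step repaired in this way, the rest of your outline goes through.
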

The stochastic integral \eqref{Right-stochatic integral} is also called right stochastic integral. Analogously, we can define left stochastic integrals, and have the Burkholder-Gundy inequality with respect to left stochastic integrals. 
\begin{lem}\cite[Theorem 7.2]{D}\label{L-R}
Let $1\leq p< \infty$. For any $f\in \mathcal{H}^p_{loc}(\mathbb{R}^+)$,  then the left stochastic integral $\int_0^t dW_s f(s)$ and right stochastic integral $\int_0^tf(s)dW_s$ can be well-defined and
$$\left\|\int_0^t dW_s f(s)\right\|_p\simeq_p\|f\|_{\mathcal{H}^{p}([0,t])}\simeq_p\left\|\int_0^t f(s)dW_s\right\|_p.$$
\end{lem}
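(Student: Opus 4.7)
The plan is to deduce the equivalence for the left Itô–Clifford integral from the corresponding right-integral statement already recorded in Lemma~\ref{px_bdg}, using only three facts: the $\ast$-involution, the self-adjointness $W_s^{\,*}=W_s$ of the Fermion Brownian motion, and the traciality of the vacuum state $m$ on $\mathscr{C}$.

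First, on the dense subspace $\mathcal{S}^p_{\mathbb{A}}([0,t])$ I would define the left integral by the obvious analogue of \eqref{Right-stochatic integral}, namely $\int_0^t dW_s\,f(s):=\sum_k(W_{t_{k+1}}-W_{t_k})f(t_k)$, and then verify the adjoint identity
$$\Bigl(\int_0^t f(s)\,dW_s\Bigr)^{\!*}=\int_0^t dW_s\,f(s)^{\ast},$$
which is immediate from $W_s^{\,*}=W_s$ and the definition of the adjoint in $\mathscr{C}$. Since $m$ is tracial, $\ast$ is an isometry on every $L^p(\mathscr{C})$, and hence the $L^p$-norm of the left integral of $f$ equals that of the right integral of $f^{\ast}$.

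Next, the norm on $\mathcal{H}^p([0,t])$ is manifestly $\ast$-symmetric: the two square-function terms in the defining maximum simply swap under $f\mapsto f^{\ast}$, so $\|f\|_{\mathcal{H}^p([0,t])}=\|f^{\ast}\|_{\mathcal{H}^p([0,t])}$. Combining this with the previous identity and the right-integral bound from Lemma~\ref{px_bdg}, I obtain for simple $f$
$$\Bigl\|\int_0^t dW_s\,f(s)\Bigr\|_p=\Bigl\|\int_0^t f^{\ast}(s)\,dW_s\Bigr\|_p\simeq_p\|f^{\ast}\|_{\mathcal{H}^p([0,t])}=\|f\|_{\mathcal{H}^p([0,t])},$$
while the right-integral half of the stated equivalence is just Lemma~\ref{px_bdg} itself. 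Since both integrals are then bounded linear maps from the dense subspace $\mathcal{S}^p_{\mathbb{A}}([0,t])$ into $L^p(\mathscr{C}_t)$ with comparable norms, a standard completion/extension argument produces isomorphisms from the whole of $\mathcal{H}^p([0,t])$ onto the appropriate closed subspaces; truncating by $\chi_{[0,t]}$ then covers $f\in\mathcal{H}^p_{loc}(\mathbb{R}^+)$.

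The main obstacle is the range $1\le p<2$, since Lemma~\ref{px_bdg} is quoted only for $p\ge 2$. For $p<2$ one has to replace the $\max$-definition of $\|\cdot\|_{\mathcal{H}^p}$ by the Pisier–Xu infimum over row/column decompositions and invoke the corresponding half of the non-commutative Burkholder–Gundy theorem (this is where the real work happens in Dirksen \cite{D}). The traciality-plus-adjoint argument above is insensitive to this reformulation, since the decomposition $f=f_r+f_c$ maps under $\ast$ to a legitimate decomposition of $f^{\ast}$ with the row and column roles exchanged; once the right-integral equivalence is granted in the full range $p\ge 1$, the left-integral version follows with no additional input.
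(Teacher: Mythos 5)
The paper offers no proof of this lemma at all: it is imported verbatim as a citation of \cite[Theorem 7.2]{D}, so there is no in-text argument to compare against. Your proposal is a correct and self-contained derivation of the left-integral half from the right-integral half, and it is the natural one: the adjoint identity $\bigl(\int_0^t f\,dW_s\bigr)^{*}=\int_0^t f^{*}\,dW_s$ written backwards gives $\int_0^t dW_s\,f=\bigl(\int_0^t f^{*}\,dW_s\bigr)^{*}$ on simple processes, traciality of the vacuum state $m$ on the Clifford algebra makes $\ast$ an $L^p$-isometry (a fact the paper itself tacitly uses in Corollary \ref{cor-bp} when it writes $\|f(s)\|_p=\|f^{*}(s)\|_p$), and the $\mathcal H^p$ norm is $\ast$-symmetric because the row and column square functions swap. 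Combined with Lemma \ref{px_bdg} and density of $\mathcal S^p_{\mathbb A}([0,t])$, this settles $p\ge 2$, which is the only range the paper ever invokes (it works throughout with $p>2$). You are also right to flag that the genuinely hard content for $1\le p<2$ — the right-integral Burkholder--Gundy equivalence with the infimum-over-row/column-decompositions norm — is not supplied by Lemma \ref{px_bdg} and must come from Pisier--Xu/Dirksen; your observation that the $\ast$-reduction survives that reformulation (a decomposition $f=f_r+f_c$ maps to one of $f^{*}$ with roles exchanged) is correct. In short: where the paper delegates everything to the literature, you have isolated exactly which part is a soft duality argument and which part carries the analytic weight.
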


By Lemma \ref{px_bdg} and Lemma \ref{L-R}, the It\^{o}-Clifford integral can be defined for any element of $\mathcal{H}^p([0,t])$, and Burkholder-Gundy inequality holds ture.
For any $t\geq 0$, let $L^q_\mathbb{A}(0,t; L^p(\mathscr{C}_t))$
be the completion of ${\cal S}^p_{\mathbb A}([0,t])$ with the norm
$$
\|f\|_{L^q_\mathbb{A}(0,t; L^p(\mathscr{C}_t))}=\left(\int_0^t\|f(s)\|_p^qds\right)^{\frac{1}{q}}.
$$
Similarly, $L^p_\mathbb{A}(\mathscr{C}_t; L^q(0,t))$ is the completion of ${\cal S}^p_{\mathbb A}([0,t])$ with the norm
$$\|f\|_{L^p_\mathbb{A}(\mathscr{C}_t; L^q(0,t))}=\left\|\left(\int_0^t|f(s)|^qds\right)^{\frac{1}{q}}\right\|_p,\  t\geq 0.$$
As an application of Minkowski inequality,  we have the following result.
\begin{thm}\label{p-q exchange}  
Let $1\leq q\leq p<\infty$. Then, for any $f\in L^q_{\mathbb A}(0,T;L^p(\mathscr{C}_T))$,
\begin{equation}\label{exchange the norm}
\left\|\left(\int_0^t|f(s)|^qds\right)^{\frac{1}{q}}\right\|_p\leq  \left(\int_0^t\|f(s)\|_p^qds\right)^{\frac{1}{q}}, \  0\leq t\leq T.
\end{equation}
Furthermore, $L^q_\mathbb{A}(0,T; L^p(\mathscr{C}_T))\subseteq L^p_\mathbb{A}(\mathscr{C}_T; L^q(0,T))$.
\end{thm}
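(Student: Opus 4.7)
The plan is to reduce the inequality to the ordinary Bochner triangle inequality on the noncommutative $L^{p/q}$-space by rewriting both sides. Set $r := p/q$, so $r \geq 1$ by hypothesis. For any positive element $A\in L^r(\mathscr{C}_T)$ one has $\|A^{1/q}\|_p = m(A^{p/q})^{1/p} = \|A\|_r^{1/q}$. Applying this with $A=\int_0^t|f(s)|^q\,ds$, the left-hand side of \eqref{exchange the norm} becomes
\begin{equation*}
\left\|\left(\int_0^t|f(s)|^q\,ds\right)^{1/q}\right\|_p
=\left\|\int_0^t|f(s)|^q\,ds\right\|_{r}^{1/q}.
\end{equation*}
On the other hand, since $|f(s)|^q$ is positive and $\||f(s)|^q\|_r = m(|f(s)|^{qr})^{1/r} = \|f(s)\|_p^{q}$, the right-hand side of \eqref{exchange the norm} is $\bigl(\int_0^t\||f(s)|^q\|_r\,ds\bigr)^{1/q}$. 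So \eqref{exchange the norm} is equivalent to the Bochner-type triangle inequality
\begin{equation*}
\left\|\int_0^t|f(s)|^q\,ds\right\|_{r}\leq\int_0^t\bigl\||f(s)|^q\bigr\|_r\,ds.
\end{equation*}

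To establish this, I would first verify it on simple adapted processes from $\mathcal S^p_{\mathbb A}([0,t])$. For $f=\sum_{k}f(t_k)\chi_{[t_k,t_{k+1})}$, the integral $\int_0^t|f(s)|^q ds=\sum_k|f(t_k)|^q(t_{k+1}-t_k)$ is a finite positive combination, and the inequality reduces to the triangle inequality for the $L^r(\mathscr{C}_T)$-norm, which is valid since $r\geq 1$. Taking $q$-th roots recovers \eqref{exchange the norm} for simple $f$.

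To pass to general $f\in L^q_{\mathbb A}(0,T;L^p(\mathscr{C}_T))$, I would choose a sequence of simple adapted processes $f_n$ with $\|f_n-f\|_{L^q_{\mathbb A}(0,T;L^p(\mathscr{C}_T))}\to 0$ (available by definition as the completion). The inequality on simple processes shows that the identity map $\mathcal S^p_{\mathbb A}([0,t])\to L^p_{\mathbb A}(\mathscr{C}_T;L^q(0,T))$ is bounded by $1$, so $\{f_n\}$ is also Cauchy in $L^p_{\mathbb A}(\mathscr{C}_T;L^q(0,T))$, yielding a limit there that coincides with $f$ (its $L^q$-in-time, $L^p$-in-space representative is the same). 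The inequality then extends by continuity, which simultaneously proves the inclusion $L^q_\mathbb{A}(0,T;L^p(\mathscr{C}_T))\subseteq L^p_\mathbb{A}(\mathscr{C}_T;L^q(0,T))$.

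The main technical point to be careful with is the first step: justifying the noncommutative triangle inequality $\|\int_0^t h(s)\,ds\|_r \leq \int_0^t\|h(s)\|_r\,ds$ for positive Bochner-integrable $h=|f|^q$. For simple $f$ this is trivial, and this is exactly why approximation by simple processes is the right route; all the subtleties about noncommutativity are absorbed into the fact that $\|\cdot\|_r$ is a genuine norm for $r\geq 1$, which is the only property of noncommutative $L^r$-spaces used in the argument.
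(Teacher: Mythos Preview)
Your proposal is correct and essentially identical to the paper's own argument: both raise to the $q$-th power to pass from $\|\cdot\|_p$ to $\|\cdot\|_{p/q}$, reduce the claim to the triangle (Minkowski) inequality in $L^{p/q}(\mathscr{C}_T)$ applied to the finite sum $\sum_k|f(t_k)|^q(t_{k+1}-t_k)$ for simple adapted processes, and then extend by density. The only cosmetic difference is that the paper works with an equal-mesh partition and cites the noncommutative Minkowski inequality explicitly, whereas you phrase the same step as a Bochner-type triangle inequality.
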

\begin{proof}
 Let
$$0=t_0\leq t_1\leq t_2\leq\cdots\leq t_n=t$$
be an equal time partition of $[0,t]$ where the mesh of the subdivision is $l=t/n=t_{k+1}-t_k$, $k=0,\ 1,\ \cdots,\ n-1$.
For simple adapted process $\sum\limits_{k\geq 0} a_{t_k}\chi_{[t_k,t_{k+1})}$ of $L^p(\mathscr{C})$
where $a_{t_k}\in L^p(\mathscr{C}_{t_k})$, and
 any positive integer $n$, one has
\begin{equation}\label{linear of norm}
 \left\|\left(\sum_{k=0}^{n-1}|a_{t_k}|^q(t_{k+1}-t_k)\right)^{\frac{1}{q}}\right\|_p
= l^{\frac{1}{q}}\left\|\left(\sum_{k=0}^{n-1}|a_{t_k}|^q\right)^{\frac{1}{q}}\right\|_p
 =l^{\frac{1}{q}}\left\|\sum_{k=0}^{n-1}|a_{t_k}|^q\right\|_{\frac{p}{q}}^{\frac{1}{q}}.
\end{equation}
Since 
$\frac{p}{q}\geq1$, by Minkowski inequality \cite[Theorem 5.2.2]{XZC},
\begin{equation}\label{M-inequality}
\left\|\sum_{k=0}^{n-1}|a_{t_k}|^q\right\|_{\frac{p}{q}}\leq\sum_{k=0}^{n-1}\left\||a_{t_k}|^q\right\|_{\frac{p}{q}}, \   n\in\mathbb{N}^+.
\end{equation}
By \eqref{linear of norm} and \eqref{M-inequality}, we have
\begin{equation*}
\begin{aligned}
&\left\|\left(\sum_{k=0}^{n-1}|a_{t_k}|^q(t_{k+1}-t_k)\right)^{\frac{1}{q}}\right\|_p
\leq
\left(\sum_{k=0}^{n-1}\|a_{t_k}\|_{p}^q(t_{k+1}-t_k)\right)^{\frac{1}{q}}.
\end{aligned}
\end{equation*}
Since ${\cal S}^p_{\mathbb A}([0,T])$ is dense in $L^p_\mathbb{A}(\mathscr{C}; L^q(0,T))$,
$$
\left\|\left(\int_0^t|f(s)|^qds\right)^{\frac{1}{q}}\right\|_p
\leq\left(\int_0^t\|f(s)\|^q_pds\right)^{\frac{1}{q}}, \   0\leq t\leq T,
$$
for any $f\in L^q_\mathbb{A}(0,T; L^p(\mathscr{C}_T))$, and $L^q_\mathbb{A}(0,T; L^p(\mathscr{C}_T))\subseteq L^p_\mathbb{A}(\mathscr{C}_T; L^q(0,T))$.
\end{proof}
\begin{rem}
Actually,  the above inequality \eqref{exchange the norm} holds for any $0< q\leq p<\infty$.
In particular, when p=q, the equality in the above inequality (2.5) holds and is Fubini's theorem in the noncommutative case. 
\end{rem}

In general, the Burkholder-Gundy inequality \eqref{BDG-PX} cannot be directly used to study $L^p$-solution to QSDE.
Instead, we need the following easy corollary.
 \begin{cor}\label{cor-bp}Let $p>2$. Then,
 for any $f\in L^2_{\mathbb A}(0,T; L^p(\mathscr{C}_T))$, there is positive constant $C(p)$ such that
 \begin{equation}\label{bp}
\left\|\int_0^tf(s)dW_s\right\|_p\leq C(p) \left(\int_0^t\|f(s)\|^2_pds\right)^{\frac{1}{2}},\  \ 0\leq t\leq T.
\end{equation}
Moreover, $L^2_{\mathbb A}(0,T; L^p(\mathscr{C}_T))\subseteq {\cal H}^p([0,T])$ and
\begin{equation}\label{bp1}
\|f\|_{{\cal H}^p([0,t])}\leq  \left(\int_0^t\|f(s)\|^2_pds\right)^{\frac{1}{2}}, \  \ 0\leq t\leq T.
\end{equation}
 \end{cor}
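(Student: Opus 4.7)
The plan is to derive this statement as a direct consequence of Theorem \ref{p-q exchange} (applied with $q=2$) combined with the Burkholder-Gundy inequality of Lemma \ref{px_bdg}. The whole point is that once we know how to swap an $L^p(\mathscr{C})$-norm past an $L^2(0,T)$-integral, the $\mathcal{H}^p$-norm can be dominated by the much friendlier $L^2_{\mathbb A}(0,T;L^p(\mathscr{C}_T))$-norm, and then the Burkholder-Gundy inequality transfers this bound to the stochastic integral.

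First I would invoke Theorem \ref{p-q exchange} with $q=2$ (the hypothesis $1\leq q\leq p<\infty$ is satisfied since $p>2$) to obtain, for any $f\in L^2_{\mathbb A}(0,T;L^p(\mathscr C_T))$,
\begin{equation*}
\left\|\left(\int_0^t|f(s)|^2 ds\right)^{\frac12}\right\|_p\leq\left(\int_0^t\|f(s)\|_p^2\,ds\right)^{\frac12},\quad 0\leq t\leq T.
\end{equation*}
Next I would apply exactly the same inequality to the adjoint process $f^*$, noting that $\|f^*(s)\|_p=\|f(s)\|_p$ for each $s$, which gives the analogous bound with $|f|^2$ replaced by $|f^*|^2$. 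Taking the maximum of these two bounds yields \eqref{bp1},
\begin{equation*}
\|f\|_{\mathcal H^p([0,t])}=\max\left\{\left\|\left(\int_0^t|f(s)|^2 ds\right)^{\frac12}\right\|_p,\;\left\|\left(\int_0^t|f^*(s)|^2 ds\right)^{\frac12}\right\|_p\right\}\leq\left(\int_0^t\|f(s)\|_p^2\,ds\right)^{\frac12},
\end{equation*}
and in particular establishes the inclusion $L^2_{\mathbb A}(0,T;L^p(\mathscr C_T))\subseteq \mathcal H^p([0,T])$.

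Finally, since $f\in\mathcal H^p([0,t])$ by the inclusion just proved, the right-hand Burkholder-Gundy inequality in Lemma \ref{px_bdg} is applicable and yields $\|\int_0^t f(s)dW_s\|_p\leq\beta_p\|f\|_{\mathcal H^p([0,t])}$. Combining this with \eqref{bp1} gives \eqref{bp} with constant $C(p):=\beta_p$. There is really no hard step here; the only thing to be careful about is verifying that the density argument underlying Theorem \ref{p-q exchange} applies uniformly to both $f$ and $f^*$ so that the row and column square functions can be simultaneously controlled, but this is immediate from the isometric action $a\mapsto a^*$ on $L^p(\mathscr C)$ and the definition of the simple-process norms.
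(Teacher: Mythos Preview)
Your proof is correct and follows essentially the same route as the paper: apply Theorem \ref{p-q exchange} with $q=2$ to $f$ and to $f^*$ (using $\|f^*(s)\|_p=\|f(s)\|_p$) to bound the $\mathcal H^p$-norm, then invoke the upper Burkholder--Gundy inequality from Lemma \ref{px_bdg} with $C(p)=\beta_p$. The paper's argument is identical in structure and detail.
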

 \begin{proof}According to Theorem \ref{p-q exchange}, for any $f\in L^2_{\mathbb A}(0,T; L^p(\mathscr{C}_T))$, one has
 \begin{equation}
\left\|\left(\int_0^t|f(s)|^2ds\right)^{\frac{1}{2}}\right\|_p\leq \left(\int_0^t\|f(s)\|_p^2ds\right)^{\frac{1}{2}}.
\end{equation}
Since $\|f(s)\|_p=\|f^*(s)\|_p$ for any $s\in[0,T]$,
$$
\|f\|_{{\cal H}^p([0,t])}=\max\left\{\left\|\left(\int_0^t|f(s)|^2ds\right)^{\frac{1}{2}}\right\|_p,\ \left\|\left(\int_0^t|f(s)^*|^2ds\right)^{\frac{1}{2}}\right\|_p\right\}\leq  \left(\int_0^t\|f(s)\|^2_pds\right)^{\frac{1}{2}}.
$$
Combined with the Burkholder-Gundy inequality \eqref{BDG-PX}, we have \eqref{bp} immediately.
\end{proof}
Now, we give the parity of each element of $L^p(\mathscr{C})$. Let the parity operator $P$ be automorphism map on von Neumann algebra $\mathscr{C}$ generated by bounded linear operators on $\Lambda(\mathscr{H})$ as is in \cite{BSW1,BSW3,PX}.
 \begin{defn}
For any $h\in L^p(\mathscr{C})$, $h$ is said to be odd if $Ph=-h$, $h$ is said to be even if $P h=h$.
And, $h$ has definite parity if $h$ is even or odd.
\end{defn}
Furthermore, for any $1< p<\infty$,
\begin{equation}\label{decomposition of even and odd}
L^p(\mathscr{C})=L^p(\mathscr{C}_o)\oplus L^p(\mathscr{C}_e),
\end{equation}
where $L^p(\mathscr{C}_e),\ L^p(\mathscr{C}_o)$ denote the even part and the odd part, respectively. More precisely, for any $h\in L^p(\mathscr{C})$, $h=\frac{h+Ph}{2}+\frac{h-Ph}{2}=h_o+h_e$, where $h_e$ and $h_o$ are even and odd, respectively. 
Since $P$ is isometric on $L^p(\mathscr{C})$,
\begin{equation*}
\max\left\{\|h_o\|_p, \|h_e\|_p\right\}\leq \|h\|_p\leq \|h_o\|_p+\|h_e\|_p.
\end{equation*}

Let $\mathscr{E}$ denote the algebra of even polynomials in the fields $\{\Psi(u):u\in\mathscr{H}\}$,
and let $\mathscr{C}_e$ be the $W^{*}$-subalgebra of $\mathscr{C}$ generated by $\mathscr{E}$.
If $h\in L^p(\mathscr{C})$ is even there is a sequence $\{h_n\}$ in $\mathscr{E}$ such that $h_n\rightarrow h$ in $L^p(\mathscr{C})$, and therefore $h_n^{*}\rightarrow h^{*}$ in $L^p(\mathscr{C})$. It follows that $h^{*}$ is also even. Similarly, if $g$ is odd in $L^p(\mathscr{C})$, there is a sequence $\{g_n\}$ of odd polynomials in the fields with $g_n\rightarrow g$ and thus $g_n^{*}\rightarrow g^{*}$ in $L^p(\mathscr{C})$, that is, $g^{*}$ is odd as well. It follows that if $h=h^{*}$ in $L^p(\mathscr{C})$ and $h=h_e+h_o$, then $h_e=h_e^{*}$ and $h_o=h_o^{*}$ in $L^p(\mathscr{C})$.

\begin{lem}\label{exchange-Brownian-definite parity}\cite[Lemma 3.15]{BSW1}
Let $\{W_t\}_{t\geq t_0}$ be Brownian motion. If $h\in L^p(\mathscr{C}_{t_0})$ has definite parity, then
$$h(W_{t_2}-W_{t_1})=\pm(W_{t_2}-W_{t_1})h,\quad t_0\leq t_1\leq t_2,
$$
depending on whether $h$ is even or odd.
\end{lem}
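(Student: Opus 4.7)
The plan is to verify the asserted commutation relation first on monomials in the Fermion fields generating $\mathscr{C}_{t_0}$, and then extend to all of $L^p(\mathscr{C}_{t_0})$ by a density argument. Write $\Delta := W_{t_2}-W_{t_1} = \Psi(\chi_{[t_1,t_2]})$ throughout.

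First I would establish the base case: for any $u \in L^2(\mathbb{R}^+)$ with $\mathrm{ess\,supp}\,u \subseteq [0,t_0]$, the single field $\Psi(u)$ anticommutes with $\Delta$. Using the canonical anticommutation relations for the creation and annihilation operators, $\{C(z),A(w)\}=\langle w,z\rangle I$ and $\{C(z),C(w)\}=\{A(z),A(w)\}=0$, a direct expansion of $\Psi(u)\Delta+\Delta\Psi(u)$ with $\Psi(u)=C(u)+A(Ju)$ and $\Psi(\chi_{[t_1,t_2]})=C(\chi_{[t_1,t_2]})+A(J\chi_{[t_1,t_2]})$ collapses to a multiple of $\bigl(\langle J\chi_{[t_1,t_2]},u\rangle+\langle Ju,\chi_{[t_1,t_2]}\rangle\bigr) I$. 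Since $\chi_{[t_1,t_2]}$ is real-valued, $J\chi_{[t_1,t_2]}=\chi_{[t_1,t_2]}$, and both inner products vanish because $[0,t_0]$ and $[t_1,t_2]$ intersect in at most a single point (as $t_0\leq t_1$). Hence $\Psi(u)\Delta=-\Delta\Psi(u)$.

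Next I would lift this to monomials. If $h=\Psi(u_1)\Psi(u_2)\cdots \Psi(u_n)$ with each $u_k$ essentially supported in $[0,t_0]$, then moving $\Delta$ past the $n$ factors one by one produces the sign $(-1)^n$, so $h\Delta=(-1)^n\Delta h$. By linearity, any even polynomial in such fields commutes with $\Delta$, and any odd polynomial anticommutes with $\Delta$. Since the parity operator $P$ acts on such a monomial by $Ph=(-1)^n h$, this matches the claimed identity $h\Delta=\pm \Delta h$ exactly, with sign $+$ when $Ph=h$ and sign $-$ when $Ph=-h$.

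Finally I would extend by density. Given $h\in L^p(\mathscr{C}_{t_0})$ of definite parity, the argument recalled after \eqref{decomposition of even and odd} in the paper, applied to the filtered subalgebra $\mathscr{C}_{t_0}$ rather than to $\mathscr{C}$, produces a sequence $\{h_n\}$ of polynomials in $\{\Psi(u):\mathrm{ess\,supp}\,u\subseteq [0,t_0]\}$ of the same parity as $h$, with $h_n\to h$ in $L^p$. Because $\Delta$ is a bounded operator on $\Lambda(\mathscr{H})$, both left and right multiplication by $\Delta$ are continuous from $L^p(\mathscr{C})$ to itself, so passing to the limit in $h_n\Delta=\pm\Delta h_n$ yields $h\Delta=\pm\Delta h$. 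The main subtlety I anticipate is the careful bookkeeping of the conjugation $J$ in the single-field anticommutation; once that is resolved, the monomial step is purely combinatorial, and the density step is routine provided one has polynomial approximation inside the correct parity component of the filtered subalgebra $\mathscr{C}_{t_0}$.
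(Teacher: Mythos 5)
Your proof is correct. The paper gives no argument of its own for this lemma (it is quoted verbatim from \cite[Lemma 3.15]{BSW1}), and your route --- anticommutation of $\Psi(u)$ with $\Delta=\Psi(\chi_{[t_1,t_2]})$ via the CAR and disjointness of supports, the sign $(-1)^n$ on monomials matching the parity $Ph=(-1)^n h$, and extension to $L^p(\mathscr{C}_{t_0})$ by approximation with polynomials of the same parity together with $L^p$-continuity of multiplication by the bounded operator $\Delta$ --- is precisely the standard argument of the cited source.
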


\begin{lem}\cite[Theorem 1.8.2 Bihari inequality]{M}\label{Bihari's inequality} Let $\rho:[0,+\infty)\to[0,+\infty)$ be a continuous and non-decreasing function vanishing at 0 satisfying $\int_{0^+}\frac{dr}{\rho(r)}=\infty$. Suppose $u(t)$ is a continuous nonnegative function on $[t_0,T]$ such that
\begin{equation}\label{B-inequality-1}
u(t)\leq u_0+\int_{t_0}^t\phi(r)\rho(u(r))dr, \  t_0\leq t\leq T,
\end{equation}
where $u_0$ is a nonnegative cinstant and $\phi:[t_0,T]\to \mathbb{R}^+$, then
$$u(t)\leq U^{-1}\left(U(u_0)+\int_{t_0}^t\phi(r)dr\right), \  t_0\leq t\leq T,
$$
where $U(t)=\int_{t_0}^t\frac{1}{\rho(r)}dr$, $U^{-1}$ is the conver function of $U$. In particular, $u_0=0$, then $u(t)=0$ for all $t_0\leq t\leq T$.
\end{lem}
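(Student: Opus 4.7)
The plan is to prove Bihari's inequality by the classical Gronwall-style differential inequality technique: first reduce to the case of strictly positive $u_0$ by an $\epsilon$-approximation, then majorize $u$ by the right-hand side of \eqref{B-inequality-1}, separate variables, and integrate using the change of variables induced by $U$.

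First I would handle the case $u_0>0$. Define
\begin{equation*}
v(t):=u_0+\int_{t_0}^t\phi(r)\rho(u(r))\,dr,\qquad t\in[t_0,T].
\end{equation*}
Then $v(t_0)=u_0>0$, $v$ is absolutely continuous and non-decreasing on $[t_0,T]$ since $\phi\geq0$ and $\rho\geq0$, and by hypothesis $u(t)\leq v(t)$. Because $\rho$ is non-decreasing, a.e.\ on $[t_0,T]$
\begin{equation*}
v'(t)=\phi(t)\rho(u(t))\leq \phi(t)\rho(v(t)).
\end{equation*}
Since $v(t)\geq u_0>0$ and $\rho(r)>0$ for $r>0$, I may divide by $\rho(v(t))$ to obtain $v'(t)/\rho(v(t))\leq \phi(t)$. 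Integrating from $t_0$ to $t$ and performing the change of variables $r=v(s)$ (valid because $v$ is absolutely continuous and monotone) yields
\begin{equation*}
\int_{v(t_0)}^{v(t)}\frac{dr}{\rho(r)}\leq \int_{t_0}^t\phi(s)\,ds,\qquad\text{i.e.}\qquad U(v(t))\leq U(u_0)+\int_{t_0}^t\phi(s)\,ds,
\end{equation*}
where I fix the base point of $U$ consistently. Because $\rho>0$ on $(0,\infty)$, $U$ is strictly increasing on $(0,\infty)$, hence invertible on its range, and $U^{-1}$ is also strictly increasing. Applying $U^{-1}$ and using $u(t)\leq v(t)$ gives the claimed bound.

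Next I would treat the degenerate case $u_0=0$. For each $\epsilon>0$, $u$ also satisfies \eqref{B-inequality-1} with $u_0$ replaced by $\epsilon$, so by the previous step
\begin{equation*}
u(t)\leq U^{-1}\!\left(U(\epsilon)+\int_{t_0}^t\phi(s)\,ds\right),\qquad t\in[t_0,T].
\end{equation*}
Here the hypothesis $\int_{0^+}dr/\rho(r)=+\infty$ enters crucially: it forces $U(\epsilon)\to-\infty$ as $\epsilon\to 0^+$, and by strict monotonicity of $U$ this forces $U^{-1}(y)\to 0^+$ as $y\to-\infty$. Letting $\epsilon\to 0^+$ therefore yields $u(t)\leq 0$, and since $u\geq0$ by hypothesis, $u(t)\equiv 0$ on $[t_0,T]$.

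The main technical obstacle I anticipate is justifying the chain-rule / change-of-variables step rigorously when $\rho$ is only assumed continuous (hence not necessarily $C^1$); the cleanest route is to note that $v$ is absolutely continuous and monotone, so $U\circ v$ is absolutely continuous with $(U\circ v)'(t)=v'(t)/\rho(v(t))$ a.e., which is exactly what the manipulation above needs. The secondary delicate point is the passage $\epsilon\to 0^+$ in the degenerate case, which relies exactly on the divergence hypothesis $\int_{0^+}dr/\rho(r)=+\infty$ and is the feature distinguishing Bihari's inequality from the classical Gronwall lemma.
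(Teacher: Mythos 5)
Your proof is correct: the reduction to $u_0>0$, the majorization $u\le v$ with $v'(t)=\phi(t)\rho(u(t))\le\phi(t)\rho(v(t))$, the separation of variables via the absolute continuity of $U\circ v$, and the limit $\epsilon\to 0^+$ using $\int_{0^+}dr/\rho(r)=+\infty$ constitute the standard argument for Bihari's inequality (modulo the routine caveats that $\phi$ must be integrable and that $U(\epsilon)+\int_{t_0}^t\phi$ must lie in the range of $U$, which holds for $\epsilon$ small). The paper itself offers no proof of this lemma --- it is quoted verbatim from Mao's book \cite{M} --- and your argument is essentially the proof given there, so there is nothing to contrast.
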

\section{The existence and uniqueness of solutions to QSDE}
\indent\indent
This section is devoted to proving the existence and uniqueness of $L^p$-solution to the equation \eqref{nonlocal condition-QSDE} with non-Lipschitz coefficients for $p>2$.

\begin{thm}\label{E-U}
Let Assumption \ref{Assump} hold. Then 
 the equation \eqref{nonlocal condition-QSDE} admits a unique solution $X(\cdot)\in C_{\mathbb{A}}(t_0,T;L^p(\mathscr{C}_T))$.
\end{thm}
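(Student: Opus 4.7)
\medskip

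\noindent\textit{Proof sketch.} The strategy is to build the solution by Picard iteration, using Corollary \ref{cor-bp}, Theorem \ref{p-q exchange} and the Bihari inequality (Lemma \ref{Bihari's inequality}) as the three core tools, together with the strict contractivity of $R$ from (A4). Set $X^0_t\equiv Z$ on $[t_0,T]$ and define inductively
\begin{equation*}
X^{n+1}_t := Z + R(X^n_t) + \int_{t_0}^t F(X^n_s,s)\,dW_s + \int_{t_0}^t dW_s\, G(X^n_s,s) + \int_{t_0}^t H(X^n_s,s)\,ds.
\end{equation*}
Adaptedness of $X^{n+1}$ follows from (A1), (A4); its $L^p$-continuity in $t$ follows from (A2), Corollary \ref{cor-bp} and Theorem \ref{p-q exchange}.

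The first substantive step is a uniform a priori bound $\sup_n \|X^n\|_{C_{\mathbb{A}}(t_0,T;L^p(\mathscr{C}_T))} < \infty$. From (A3) one has $\|F(x,t)\|_p^2 \le 2\|F(0,t)\|_p^2 + 2\rho(\|x\|_p^2)$ and likewise for $G,H$. Applying Corollary \ref{cor-bp} to the two stochastic integrals and Minkowski/Theorem \ref{p-q exchange} to the drift, together with the elementary split $(a+b)^2 \le (1+\varepsilon)a^2 + (1+\varepsilon^{-1})b^2$ used to isolate the $R$-contribution, gives
\begin{equation*}
\|X^{n+1}_t\|_p^2 \;\le\; K + q\,\|X^n_t\|_p^2 + C\int_{t_0}^t \rho(\|X^n_s\|_p^2)\,ds,
\end{equation*}
where $\varepsilon>0$ is chosen so that $q := (1+\varepsilon)C(R)^2<1$; this is possible precisely because $C(R)<1$. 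Taking $\Psi_N(t):=\max_{0\le n\le N}\sup_{s\le t}\|X^n_s\|_p^2$, absorbing $q\Psi_N(t)$ to the left, and then invoking Lemma \ref{Bihari's inequality} yields an $N$-uniform bound on $\Psi_N(T)$.

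The Cauchy estimate is parallel. With $\Theta_{n,m}(t) := \sup_{s\in[t_0,t]}\|X^n_s - X^m_s\|_p^2$, now applying (A3) and (A4) to differences (and using monotonicity of the upper limit to commute $\sup_{s\le t}$ with the time-integral), I obtain
\begin{equation*}
\Theta_{n+1,m+1}(t) \;\le\; q\,\Theta_{n,m}(t) + C\int_{t_0}^t \rho(\Theta_{n,m}(s))\,ds.
\end{equation*}
The a priori bound makes $\Theta(t):=\limsup_{n,m\to\infty}\Theta_{n,m}(t)$ finite, and continuity/monotonicity of $\rho$ give $\limsup\rho(\Theta_{n,m})\le\rho(\Theta)$; reverse Fatou then delivers $(1-q)\Theta(t)\le C\int_{t_0}^t\rho(\Theta(s))\,ds$. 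Since $\int_{0^+} dr/\rho(r)=\infty$, Lemma \ref{Bihari's inequality} (with $u_0=0$) forces $\Theta\equiv 0$. Hence $(X^n)$ is Cauchy in $C_{\mathbb{A}}(t_0,T;L^p(\mathscr{C}_T))$, and the limit $X$ satisfies the integral equation by passing $n\to\infty$ through Corollary \ref{cor-bp} and the continuity of $R,F,G,H$ supplied by (A3)--(A4). Uniqueness is the same Bihari argument applied directly to $\|X_t-Y_t\|_p^2$ for two solutions.

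The main obstacle I anticipate is the implicit nonlocal term $R(X_t)$: unlike a usual SDE initial condition, $R(X^n_t)-R(X^m_t)$ survives differencing and couples $\Theta_{n+1,m+1}(t)$ back to $\Theta_{n,m}(t)$ with coefficient $C(R)^2$. The scheme closes only because (A4) supplies a \textit{strict} contraction, so one may spend an $(1+\varepsilon)$-factor to compress this coefficient into a $q<1$ that can be absorbed on the left before invoking Bihari --- this is the one place where the standard Picard argument genuinely needs modification.
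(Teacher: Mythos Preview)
Your argument is correct and close in spirit to the paper's, but it differs in one structural choice: the iteration scheme. The paper defines the iterates \emph{implicitly} in $R$,
\[
X_t^{(n+1)} = Z + R\bigl(X_t^{(n+1)}\bigr) + \int_{t_0}^t F(X_s^{(n)},s)\,dW_s + \int_{t_0}^t dW_s\,G(X_s^{(n)},s) + \int_{t_0}^t H(X_s^{(n)},s)\,ds,
\]
invoking the Banach fixed-point theorem at each step (possible since $R$ is a strict contraction by (A4)) to make $X^{(n+1)}$ well-defined. The payoff is that when one differences consecutive iterates the $R$-term carries the \emph{same} index on both sides, so $C(R)\|X^{(n+1)}_t-X^{(n)}_t\|_p$ is subtracted off directly, leaving the clean recursion $\|X^{(n+1)}_t-X^{(n)}_t\|_p^2 \le C(p,R,T)\int_{t_0}^t \rho(\|X^{(n)}_s-X^{(n-1)}_s\|_p^2)\,ds$ with no surviving algebraic coupling before Bihari is applied. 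Your explicit scheme $X^{n+1} = Z + R(X^n) + \cdots$ avoids the per-step fixed-point argument, but the $R$-difference then couples $\Theta_{n+1,m+1}$ back to $\Theta_{n,m}$ with weight $q=(1+\varepsilon)C(R)^2$, which you (correctly) absorb only \emph{after} passing to the $\limsup$. Both routes exploit $C(R)<1$ in an essential way; the paper's is a little cleaner in the recursion, yours is a little more elementary in that no auxiliary fixed point is ever solved. One caveat shared by both arguments: the uniform a~priori bound $\sup_n\|X^n\|_{C_{\mathbb A}}<\infty$ does not obviously follow from the Osgood condition alone---Bihari with $u_0>0$ needs $U(u_0)+C(T-t_0)$ to lie in the range of $U$, which can fail if $\rho$ grows too fast at infinity---and the paper simply asserts this uniform bound without proof, so your sketch is no worse on this point.
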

\begin{proof}
We shall deal with the existence and uniqueness separately.\\
{\bf{Existence:}} The proof of the existence is divided into three steps.

\textbf{Step 1.} The iteration $\{X_t^{(n)}\}_{n\geq 0}$ is well-defined for any $t\in[t_0,T]$.
Let $T>t_{0}$,  $t_0\leq t\leq T$ be fixed. For any non-negative integer $n$, define $X_t^{(n)}$ in $L^p(\mathscr{C})$ inductively by
\begin{equation}\label{Iteration}
X_t^{(n+1)}=Z+R(X_t^{(n+1)})+\int_{t_0}^{t}F(X_s^{(n)},s)dW_s+\int_{t_0}^{t}dW_s G(X_s^{(n)},s)+\int_{t_0}^{t}H(X_s^{(n)},s)ds,
\end{equation}
where the well-definedness of iteration comes from Banach fixed point theorem and that $R$ is a strict contraction.

Firstly, we claim that each $X_t^{(n)}$, $n\geq1$, defines an adapted $L^p$-continuous process on $[t_0,T]$ by induction. By assumption, $F(Z,s),\ G(Z, s)$ and $H(Z,s)$ are $L^p$-continuous with respect to $s$ and belong to $L^p(\mathscr{C}_s)$ for $t_0\leq s\leq T$, then quantum stochastic integral $X_t^{(1)}$ exists for $t\in[t_0, T]$.
Furthermore, we can obtain the boundedness of $X_t^{(1)}$ by the continuity on compact sets and easily verify that $t\mapsto X_t^{(1)}$ is continuous:~$[t_0, T]\rightarrow L^p(\mathscr{C})$.

Now, if $X_t^{(n)}$ is assumed to be adapted and continuous, then $F(X_t^{(n)},t),~G(X_t^{(n)},t)$ and $H(X_t^{(n)},t)$ are adapted, $L^p$-continuous on $[t_0, ~T]$ and bounded, thus $X_t^{(n+1)}$ is adapted.
For any $t_1,\ t_2\in [t_0,T]$, by \eqref{BDG-PX} and Assumption \ref{Assump},
\begin{align*}
\|X_{t_1}^{(n+1)}-X_{t_2}^{(n+1)}\|_p&\leq \|R(X_{t_1}^{(n+1)})-R(X_{t_2}^{(n+1)})\|_p+\left\|\int_{t_1}^{t_2}F(X_s^{(n)},s)dW_s\right\|_p\\
&\indent+\left\|\int_{t_1}^{t_2}dW_s G(X_s^{(n)},s)\right\|_p+\left\|\int_{t_1}^{t_2}H(X_s^{(n)},s)ds\right\|_p\\
&\leq C(R)\|X_{t_1}^{(n+1)}-X_{t_2}^{(n+1)}\|_p+C(p)\|F(X_s^{(n)},s)\|_{\mathcal{H}^p([t_1,t_2])}\\
&\indent+C(p)\|G(X_s^{(n)},s)\|_{\mathcal{H}^p([t_1,t_2])}+\left\|\int_{t_1}^{t_2}H(X_s^{(n)},s)ds\right\|_p.
\end{align*}
Now, subtracting $C(R)\|X_{t_1}^{(n+1)}-X_{t_2}^{(n+1)}\|_p$ from both sides of above inequality and applying Corollary \ref{cor-bp} and H\"{o}lder inequality, we get
\begin{align*}
&(1-C(R))\|X_{t_1}^{(n+1)}-X_{t_2}^{(n+1)}\|_p\\
&\leq C(p)\left(\int_{t_1}^{t_2} \|F(X_s^{(n)},s)\|_p^2ds\right)^{\frac{1}{2}}+C(p)\left(\int_{t_1}^{t_2}\|G(X_s^{(n)},s)\|_p^2ds\right)^{\frac{1}{2}}+C(T)\left(\int_{t_1}^{t_2}\|H(X_s^{(n)},s)\|_p ^2ds\right)^{\frac{1}{2}},
\end{align*}
which implies that $t\mapsto X_t^{(n+1)}$ is $L^p$-continuous on $[t_0, T]$. 
Hence we have proved our claim by induction.

\textbf{Step 2.} The sequence of iteration is convergent under the given conditions. For any $t\in[t_0, T]$, by Minkowski inequality,
$$
\begin{aligned}
&\|X_t^{(n+1)}-X_t^{(n)}\|_p\\
&\indent\leq\|R(X_t^{(n+1)})-R(X_t^{(n)})\|_p+\left\|\int_{t_0}^t(F(X_s^{(n)},s)-F(X_s^{(n-1)},s))dW_s\right\|_p\\
&\indent\indent+\left\|\int_{t_0}^tdW_s(G(X_s^{(n)},s)-G(X_s^{(n-1)},s))\right\|_p+\left\|\int_{t_0}^t(H(X_s^{(n)},s)-H(X_s^{(n-1)},s))ds\right\|_p.
\end{aligned}
$$
By similar analysis as above, the elementary inequality $(a+b+c)^2\leq3(a^2+b^2+c^2)$, H\"{o}lder inequality and 
 Osgood conditions of $F,\ G,\ H$,
there is constant $C( p, R, T)$ such that
\begin{equation*}
\begin{aligned}
&\|X_t^{(n+1)}-X_t^{(n)}\|_p^2\\
&\indent\leq\frac{3}{(1-C(R))^2}\Bigg(C^2(p) \int_{t_0}^t\|F(X_s^{(n)},s)-F(X_s^{(n-1)},s)\|_p^2ds\\
&\indent\indent+C^2(p)\int_{t_0}^t\|G(X_s^{(n)},s)-G(X_s^{(n-1)},s)\|_p^2ds+C^2(T)\int_{t_0}^t\|H(X_s^{(n)},s)-H(X_s^{(n-1)},s)\|_p^2ds\Bigg)\\
&\indent\leq C(p, R, T)\int_{t_0}^{t}\Big(\|F(X_s^{(n)},s)-F(X_s^{(n-1)},s)\|_p^2+\|G(X_s^{(n)},s)-G(X_s^{(n-1)},s)\|_p^2\\
&\indent\indent+\|H(X_s^{(n)},s)-H(X_s^{(n-1)},s)\|_p^2\Big)ds\\
&\indent\leq C( p, R, T)\int_{t_0}^{t}\rho\left(\|X_s^{(n)}-X_s^{(n-1)}\|_p^2\right)ds,
\end{aligned}
\end{equation*}
where $C(p, R, T)=\frac{3}{(1-C(R))^2}\max\{C^2(p),\  C^2(T)\}$. 

Therefore, for any $n,\ k\geq 1$, $t\in[t_0,T]$,
$$
\|X_t^{(n+k)}-X_t^{(n)}\|_p^2\leq C(p, R, T)\int_{t_0}^t\rho\left(\|X_s^{(n+k-1)}-X_s^{(n-1)}\|_p^2\right)ds.
$$
Since each $X_t^{(n)}$ is $L^p$-continuous process on $[t_0,T]$ for any $n\in \mathds{N}^+$, $\|X_t^{(n)}\|_p$ is uniformly bounded on $[t_0,T]$. Set $u_{n,k}(t)=\sup\limits_{s\in[t_0,t]}\|X_s^{(n+k)}-X_s^{(n)}\|_p^2$, $t\in[t_0,T]$, which is uniformly bounded. Then
$$u_{n,k}(t)\leq C(p, R, T)\int_{t_0}^t\rho(u_{n-1,k}(s))ds. $$
Let $v_n(t)=\sup\limits_ku_{n,k}(t)$, $t\in[t_0, T]$. Then,
$$0\leq v_n(t)\leq C(p, R, T)\int_{t_0}^t\rho(v_{n-1}(s))ds.$$
Denote $$\alpha(t)=\limsup_{n\to+\infty} v_n(t),\  t_0\leq t\leq T.$$
Applying Lebesgue dominated convergence theorem, we get
$$0\leq \alpha(t)\leq C(p, R, T)\int_{t_0}^t\rho(\alpha(s))ds,\  t_0\leq t\leq T.$$
Hence, by Lemma \ref{Bihari's inequality}, one deduces
$$\alpha(t)=0,\ t_0\leq t\leq T,$$
which implies that $\{X_t^{(n)}\}_{n\geq0}$ is a Cauchy sequence in $L^p(\mathscr{C})$.

\textbf{Step 3.} $X(\cdot)\in C_\mathbb{A}(t_0,T;L^p(\mathscr{C}_T))$ is the solution to QSDE \eqref{nonlocal condition-QSDE}.
Since $\{X_t^{(n)}\}_{n\geq0}$ is a Cauchy sequence in $L^p(\mathscr{C})$, there exists $X_t\in L^p(\mathscr{C})$ such that for any $t\in[t_0,T]$,
$$\lim_{n\to\infty}\|X^{(n)}_t-X_t\|_p=0.$$
Thus, for any $\varepsilon>0$, there exists $\delta>0$ such that 
\begin{align*}
\|X_{t_1}-X_{t_2}\|_p&=\|X_{t_1}-X_{t_1}^{(n)}+X_{t_1}^{(n)}-X_{t_2}^{(n)}+X_{t_2}^{(n)}-X_{t_2}\|_p\\
&\leq\|X_{t_1}-X_{t_1}^{(n)}\|_p+\|X_{t_1}^{(n)}-X_{t_2}^{(n)}\|_p+\|X_{t_2}^{(n)}-X_{t_2}\|_p\\
&<\varepsilon\quad \textrm{as}\ n\to\infty,\ \forall\ t_1, t_2\in [t_0,T]\  \textrm{satisfying}\ |t_1-t_2|<\delta.
\end{align*}
It shows that $X_t$ is $L^p$-continuous and adapted on $[t_0,T]$ since $X_t^{(n)}$ is $L^p$-continuous and adapted.

We shall prove that $\{X_t\}_{t\geq t_0}$ is the solution to
$$
X_t=Z+R(X_t)+\int_{t_0}^tF(X_s,s)dW_s+\int_{t_0}^t dW_s G(X_s,s)+\int_{t_0}^tH(X_s,s)ds,\ \textrm{a.s.\ in}\ [t_0,T].
$$
In fact,
$$
\begin{aligned}
\left\|\int_{t_0}^t F(X_s^{(n)},s)dW_s-\int_{t_0}^t F(X_s,s)dW_s\right\|_p^2&\leq C^2(p)\|F(X_s^{(n)},s)-F(X_s,s)\|^2_{\mathcal{H}^{p}([t_0,t])}\\
&\leq C^2(p)\int_{t_0}^t\|F(X_s^{(n)},s)-F(X_s,s)\|_p^2ds\\
&\leq C^2(p)\int_{t_0}^t\rho\left(\|X_s^{(n)}-X_s\|_p^2\right)ds,\\
&\rightarrow 0,\ \textrm{as}\ n\to \infty,
\end{aligned}
$$
since $X_s^{(n)}\to X_s$ in $L^p(\mathscr{C})$ for any $s\in[t_0,T]$ and $\rho$ is continuous. 
Similarly,
$$
\int_{t_0}^t dW_s G(X_s^{(n)},s)\rightarrow\int_{t_0}^t dW_s G(X_s,s)~{\rm and }~ \int_{t_0}^tH(X_s^{(n)},s)ds\rightarrow \int_{t_0}^tH(X_s,s)ds
$$
in $L^p(\mathscr{C})$. 
Because $X_s^{(n)}\rightarrow X_s$ for any $s\in[t_0,T]$, the same is true for $R(X_s^{(n)})\rightarrow R(X_s)$.\\
Taking limits on both sides of \eqref{Iteration}, it deduces that
$$
\begin{aligned}
X_t=&\lim_{n\rightarrow\infty}X_t^{(n+1)}\\
=&\lim_{n\rightarrow\infty}\left(Z+R(X^{(n+1)}_t)+\int_{t_0}^tF(X_s^{(n)},s)dW_s+\int_{t_0}^t dW_s G(X_s^{(n)},s)+\int_{t_0}^tH(X_s^{(n)},s)ds\right)\\
=&Z+R(X_t)+\int_{t_0}^tF(X_s,s)dW_s+\int_{t_0}^t dW_s G(X_s,s)+\int_{t_0}^tH(X_s,s)ds, \ \textrm{in}\ [t_0,T].
\end{aligned}
$$
That is, $\{X_t\}_{t\geq t_0 }$ is a $L^p$-solution to the equation \eqref{nonlocal condition-QSDE}. 

\textbf{Uniqueness:}
Suppose that $Y_t,\ t\in [t_0,T]$ is another adapted $L^p$-continuous solution with $Y_{t_0}=Z+R(Y)$.  Then, by \eqref{nonlocal condition-QSDE}, we obtain again
$$Y_t=Z+R(Y_t)+\int_{t_0}^tF(Y_s,s)dW_s+\int_{t_0}^tdW_s G(Y_s,s)+\int_{t_0}^tH(Y_s,s)ds,\ \textrm{a.s.\ in}\ [t_0,T].$$
Furthermore,
\begin{align*}
\|X_t-Y_t\|_p&\leq\|R(X_t)-R(Y_t)\|_p+\left\|\int_{t_0}^t(F(X_s,s)-F(Y_s,s))dW_s\right\|_p\\
&\indent+\left\|\int_{t_0}^t dW_s(G(X_s,s)-G(Y_s,s))\right\|_p+\left\|\int_{t_0}^t(H(X_s,s)-H(Y_s,s))ds\right\|_p.
\end{align*}
Continuing to use the same technique as Step 2 of existence, we can yield that
\begin{equation*}
\|X_t-Y_t\|_p^2\leq C(p, R, T)\int_{t_0}^t\rho\left(\|X_s-Y_s\|_p^2\right)ds,\  t_0\leq t\leq T.
\end{equation*}
It follows that, for any $t\in[t_0, T]$,
$$\|X_t-Y_t\|_p=0,\ \textrm{ a.s.}$$
This completes the proof.
\end{proof}
As described in \cite{AH1}, the It\^{o} product rule $dA(\chi_{[0,t)})dA^{*}(\chi_{[0,t)})=dt$ holds for any $t\geq 0$. Based on \cite{BSW4}, let 
$$\xi_t=\alpha_1 A(u\chi_{[0,t)})+\alpha_2 A^*(u\chi_{[0,t)})$$for any $t\geq 0$, the integral $\int_0^tf(s)d\xi_s$ defines a quantum martingale for any $f\in {\cal S}_{\mathbb A}^p(\mathbb R^+)$.
Next, let $A(t):=A( \chi_{[0,t)})$, we study the properties of the $L^p$-solutions to QSDE \eqref{SDE-2} with respect to Brownian motion $A(t)$ and $A^*(t)$
on the basis of martingale inequalities.
From Lemma \ref{exchange-Brownian-definite parity} and the canonical anticommutation relation, we can deduce the following martingale inequalities.
\begin{thm}\label{estimate-A-A*-martingale}
Let $f:\ [0,T]\to L^p(\mathscr{C})$ be adapted processes with $p\geq 2$. Then, for any $t\in[0,T]$, $\int_0^tf(s)dA(s)$ and $\int_0^tdA^*(s)f(s)$ are $L^p$-martingales and
\begin{equation}\label{estimate-A-A*}
 \begin{aligned}
\left\|\int_0^tf(s)dA(s)\right\|_p\leq \beta_p \left(\int_0^t\|f(s)\|_p^2ds\right)^{\frac{1}{2}},\\
\left\|\int_0^t dA^*(s)f(s)\right\|_p\leq \beta_p \left(\int_0^t\|f(s)\|_p^2ds\right)^{\frac{1}{2}}.
 \end{aligned}
\end{equation}
\begin{proof}
First, we consider simple adapted $L^p$-process $f\in {\cal S}^p_{\mathbb A}([0,T])$, then $\int_0^tf(s)dA(s)$ and $\int_0^tdA^*(s)f(s)$ are $L^p$-martingales.

Let$$0=t_0\leq t_1\leq t_2\leq \ldots\leq t_n=t$$
be a partition of $[0,t]$.
Then
$$Q(t)=\int_0^tf(s)dA(s)=\sum_{k=0}^{n-1}f(t_k)\left(A(t_{k+1})-A(t_k)\right),$$
$$Q(t_k)=\sum_{i=0}^{k-1}f(t_i)\left(A(t_{i+1})-A(t_i)\right), \  k\in \mathds{N}.$$
Define the martingale difference of $Q(t)$ as
$$dQ_k=Q(t_{k+1})-Q(t_{k})=f(t_k)(A(t_{k+1})-A(t_{k})), \ k\in \mathds{N}^{+}.$$
By Theorem 2.1 of \cite{PX}, there exists a positive constant $\beta_p$ such that
\begin{equation}\label{A-A*-BG}
\left\|\int_0^tf(s)dA(s)\right\|_p\leq \beta_p\max\left\{\left\|\left(\sum_{k\geq0}|dQ_k|^2\right)^{\frac{1}{2}}\right\|_p, \quad \left\|\left(\sum_{k\geq0}|dQ^*_k|^2\right)^{\frac{1}{2}}\right\|_p\right\}.
\end{equation}
By the canonical anticommutation relation $$A(t)A^*(t)+A^*(t)A(t)=t,\ t\geq 0,$$
one has
\begin{equation}\label{CAR}
  (A(t)-A(s))(A^*(t)-A^*(s))\leq t-s,\ (A^*(t)-A^*(s))(A(t)-A(s))\leq t-s,\  0\leq s\leq t\leq T.
\end{equation}
According to \eqref{decomposition of even and odd}, $f=f_e+f_o$ for any $f\in L^p(\mathscr{C})$, then
\begin{equation}\label{Q-Estimate}
  \begin{aligned}
     \sum_{k\geq0}|dQ_k|^2 =&\sum_{k=0}^{n-1}\left(A^*(t_{k+1})-A^*(t_{k})\right)f^*(t_k)f(t_k)\left(A(t_{k+1})-A(t_{k})\right) \\
     =&\sum_{k=0}^{n-1}\left(f^*_e(t_k)-f^*_o(t_k)\right)\left(A^*(t_{k+1})-A^*(t_{k})\right)\left(A(t_{k+1})-A(t_{k})\right)\left(f_e(t_k)-f_o(t_k)\right)\\
    \leq&\sum_{k=0}^{n-1}\left(f^*_e(t_k)-f^*_o(t_k)\right)(t_{k+1}-t_k)\left(f_e(t_k)-f_o(t_k)\right)\\
    =&\int_0^t|f_e(s)-f_o(s)|^2ds,
   \end{aligned}
\end{equation}
and
\begin{equation}\label{Q*-Estimate}
  \begin{aligned}
     \sum_{k\geq0}|dQ^*_k|^2 =&\sum_{k=0}^{n-1}f(t_k)(A(t_{k+1})-A(t_{k}))(A^*(t_{k+1})-A^*(t_{k}))f^*(t_k) \\
    \leq& \sum_{k=0}^{n-1}f(t_k)f^*(t_k)(t_{k+1}-t_{k})\\
    =&\int_0^t|f^*(s)|^2ds,
   \end{aligned}
\end{equation}
where the above two inequalities are based on Lemma \ref{exchange-Brownian-definite parity} and \eqref{CAR}.

Substituting \eqref{Q-Estimate} and \eqref{Q*-Estimate} into the right side of \eqref{A-A*-BG} and applying Theorem \ref{p-q exchange}, we get
\begin{equation*}
\begin{aligned}
\left\|\int_0^tf(s)dA(s)\right\|_p&\leq \beta_p\max\left\{\left\|\left(\int_0^t|f^*(s)|^2ds\right)^\frac{1}{2}\right\|_p,\ \left\|\left(\int_0^t|f_e(s)-f_o(s)|^2ds\right)^{\frac{1}{2}}\right\|_p\right\}\\
&\leq \beta_p\max\left\{\left(\int_0^t\|f^*(s)\|_p^2ds\right)^\frac{1}{2},\ \left(\int_0^t\|f_e(s)-f_o(s)\|_p^2ds\right)^\frac{1}{2} \right\}.
\end{aligned}
\end{equation*}
On the other hand, for any $f\in {\cal S}^p_{\mathbb A}([0,T])$ and any  $s\in[0,T]$,
$$\|f_e(s)-f_o(s)\|_p\leq2\|f(s)\|_p,\ \|f^*(s)\|_p=\|f(s)\|_p.$$
Thus we have
\begin{equation}\label{estimate-A}
\left\|\int_0^tf(s)dA(s)\right\|_p\leq \beta_p\left(\int_0^t\|f(s)\|_p^2ds\right)^\frac{1}{2},\ 0\leq t\leq T.
\end{equation}
Similarly, one has
\begin{equation}\label{estimate-A*}
\left\|\int_0^t dA^*(s)f(s)\right\|_p\leq \beta_p\left(\int_0^t\|f(s)\|_p^2ds\right)^\frac{1}{2},\  0\leq t\leq T.
\end{equation}

Finally, since the general adapted $L^p$-processes can be approximated by simple processes, \eqref{estimate-A-A*} can be directly obtained from \eqref{estimate-A} and \eqref{estimate-A*}.
\end{proof}
\end{thm}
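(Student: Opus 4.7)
My plan is to first establish the estimates for simple adapted $L^p$-processes and then extend to general adapted processes by density. For a simple $f=\sum_{k} f(t_k)\chi_{[t_k,t_{k+1})}\in\mathcal{S}^p_{\mathbb{A}}([0,T])$, the martingale property of $Q(t):=\int_0^t f(s)\,dA(s)=\sum_k f(t_k)(A(t_{k+1})-A(t_k))$ follows from the fact that the annihilation increments $A(t_{k+1})-A(t_k)$ are martingale differences with respect to $\{\mathscr{C}_t\}$, and the analogous statement holds for $\int_0^t dA^*(s)f(s)$. Applying the noncommutative Burkholder--Gundy inequality of Pisier--Xu to this discrete martingale, one obtains
\begin{equation*}
\|Q(t)\|_p\leq\beta_p\max\left\{\left\|\Bigl(\sum_{k\geq 0}|dQ_k|^2\Bigr)^{1/2}\right\|_p,\ \left\|\Bigl(\sum_{k\geq 0}|dQ_k^*|^2\Bigr)^{1/2}\right\|_p\right\},
\end{equation*}
with $dQ_k=f(t_k)(A(t_{k+1})-A(t_k))$, reducing the task to bounding the two square functions by $\int_0^t\|f(s)\|_p^2\,ds$.

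The engine driving both bounds is the canonical anticommutation relation $A(t)A^*(t)+A^*(t)A(t)=t$, which yields the operator inequalities
\begin{equation*}
(A(t)-A(s))(A^*(t)-A^*(s))\leq t-s,\quad (A^*(t)-A^*(s))(A(t)-A(s))\leq t-s.
\end{equation*}
For the row square function $\sum_k |dQ_k^*|^2 = \sum_k f(t_k)(A(t_{k+1})-A(t_k))(A^*(t_{k+1})-A^*(t_k))f^*(t_k)$, the coefficients $f(t_k)$ and $f^*(t_k)$ sit on the outside of the operator product, so the CAR bound gives directly $\sum_k f(t_k)f^*(t_k)(t_{k+1}-t_k)=\int_0^t|f^*(s)|^2\,ds$.

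The column square function $\sum_k|dQ_k|^2=\sum_k(A^*(t_{k+1})-A^*(t_k))f^*(t_k)f(t_k)(A(t_{k+1})-A(t_k))$ is the main obstacle, since $f^*(t_k)f(t_k)$ is sandwiched between $A^*$ and $A$ and one cannot directly invoke the CAR. My strategy is to exploit the parity decomposition $f(t_k)=f_e(t_k)+f_o(t_k)$ together with Lemma \ref{exchange-Brownian-definite parity}, which says that elements of $L^p(\mathscr{C}_{t_k})$ of definite parity either commute (if even) or anticommute (if odd) with $A(t_{k+1})-A(t_k)$ and its adjoint. Pulling the parity components through the increments rewrites the summand as $(f_e^*(t_k)-f_o^*(t_k))(A^*(t_{k+1})-A^*(t_k))(A(t_{k+1})-A(t_k))(f_e(t_k)-f_o(t_k))$, after which the CAR bound yields $\int_0^t|f_e(s)-f_o(s)|^2\,ds$.

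To finish, I apply Theorem \ref{p-q exchange} with $q=2$ to bring the $L^p$-norm inside the $ds$-integral, and use $\|f_e-f_o\|_p\leq\|f_e\|_p+\|f_o\|_p\leq 2\|f\|_p$ together with $\|f^*\|_p=\|f\|_p$, absorbing the factor $2$ into the constant. The same argument with the roles of $A$ and $A^*$ swapped handles $\int_0^t dA^*(s)f(s)$, and density of $\mathcal{S}^p_{\mathbb A}([0,T])$ in the space of adapted $L^p$-processes extends the inequalities in the standard way. The only real difficulty is the parity manoeuvre in the column square function, where the CAR alone is insufficient to move the operator values past the creation/annihilation increments.
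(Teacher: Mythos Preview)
Your proposal is correct and follows essentially the same route as the paper's proof: apply the Pisier--Xu Burkholder--Gundy inequality to the discrete martingale, bound the row square function directly via the CAR inequality, handle the column square function by the parity decomposition and Lemma~\ref{exchange-Brownian-definite parity} to pull $f_e-f_o$ outside the $A^*A$ product, then invoke Theorem~\ref{p-q exchange} and extend by density. The only cosmetic difference is that you explicitly flag the factor $2$ from $\|f_e-f_o\|_p\le 2\|f\|_p$ as being absorbed into the constant, whereas the paper writes $\beta_p$ throughout (in fact $\|f_e-f_o\|_p=\|Pf\|_p=\|f\|_p$ since $P$ is an isometry, so no absorption is needed).
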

Clearly, by virtue of Theorem \ref{E-U} and Theorem \ref{estimate-A-A*-martingale}, the following result holds.
\begin{cor}\label{E-U-A-A*}
Let Assumption \ref{Assump} hold. 
Then there is a unique solution $X(\cdot)\in C_\mathbb{A}(t_0,T;L^p(\mathscr{C}_T))$ to the equation \eqref{SDE-2} 
with nonlocal condition $X_{t_0}=Z+R(X)\in L^p(\mathscr{C}_{t_0})$ on $[t_0,T]$.
\end{cor}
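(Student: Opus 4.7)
The plan is to mirror the three-step argument of Theorem \ref{E-U}, substituting Theorem \ref{estimate-A-A*-martingale} for Corollary \ref{cor-bp} at every point where a stochastic-integral estimate is needed. The key observation is that Theorem \ref{estimate-A-A*-martingale} delivers precisely the same shape of bound
$$\Bigl\|\int_{t_0}^t f(s)\,dA_s\Bigr\|_p \leq \beta_p \Bigl(\int_{t_0}^t\|f(s)\|_p^2\,ds\Bigr)^{1/2}, \quad \Bigl\|\int_{t_0}^t dA^*_s\, f(s)\Bigr\|_p \leq \beta_p \Bigl(\int_{t_0}^t\|f(s)\|_p^2\,ds\Bigr)^{1/2}$$
as Corollary \ref{cor-bp} does for the Fermion Brownian motion $W_t$, so the machinery transports with almost no modification.

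First I would set up the Picard iteration: put $X_t^{(0)} := Z$ and inductively define
$$X_t^{(n+1)} = Z + R(X_t^{(n+1)}) + \int_{t_0}^t F(X_s^{(n)},s)\,dA_s + \int_{t_0}^t dA^*_s\,G(X_s^{(n)},s) + \int_{t_0}^t H(X_s^{(n)},s)\,ds,$$
where the implicit appearance of $X_t^{(n+1)}$ inside $R$ is resolved by Banach's fixed point theorem using the strict contraction condition (A4). By induction on $n$, invoking (A1)--(A2), Theorem \ref{estimate-A-A*-martingale} and H\"older's inequality exactly as in Step 1 of Theorem \ref{E-U}, each iterate $t\mapsto X_t^{(n)}$ will be shown to be adapted and $L^p$-continuous on $[t_0,T]$.

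Next, to prove the Cauchy property I would apply Theorem \ref{estimate-A-A*-martingale} to the differences, transpose the $R$-term to the left and divide by $1-C(R)>0$, then combine the elementary inequality $(a+b+c)^2\leq 3(a^2+b^2+c^2)$ with the Osgood condition (A3) to obtain
$$\|X_t^{(n+1)} - X_t^{(n)}\|_p^2 \leq C(p,R,T) \int_{t_0}^t \rho\bigl(\|X_s^{(n)} - X_s^{(n-1)}\|_p^2\bigr)\,ds.$$
Setting $v_n(t) := \sup_{k\geq 1}\sup_{s\in[t_0,t]}\|X_s^{(n+k)} - X_s^{(n)}\|_p^2$, which is uniformly bounded thanks to the $L^p$-continuity established in the first step, I would take $\limsup_{n\to\infty}$, pass the limit inside the integral via Lebesgue's dominated convergence theorem, and invoke Lemma \ref{Bihari's inequality} (the Bihari inequality) to force $\limsup_n v_n(t) \equiv 0$. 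Thus $\{X_t^{(n)}\}$ is Cauchy in $L^p(\mathscr{C})$ uniformly for $t\in[t_0,T]$.

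Finally, letting $X_t$ be the $L^p$-limit, I would pass to the limit in each of the four integrals using continuity of $R$, the Osgood condition, and Theorem \ref{estimate-A-A*-martingale}, concluding that $X\in C_{\mathbb{A}}(t_0,T;L^p(\mathscr{C}_T))$ solves \eqref{SDE-2} with the given nonlocal initial value. Uniqueness follows by applying the same Bihari argument to the difference of two adapted $L^p$-continuous solutions, as in the Uniqueness part of Theorem \ref{E-U}. I do not anticipate any substantive obstacle: the parity subtleties specific to $A,A^*$ are already absorbed inside Theorem \ref{estimate-A-A*-martingale}, so the remaining work is essentially bookkeeping. The one point that requires genuine care is verifying that Theorem \ref{estimate-A-A*-martingale} applies to the adapted $L^p$-processes produced by the iteration (i.e.\ that no additional regularity or definite-parity hypothesis slips in when passing to generic adapted processes), but this is guaranteed by the approximation by simple processes used to extend the inequality in the proof of Theorem \ref{estimate-A-A*-martingale}.
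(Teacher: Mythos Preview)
Your proposal is correct and matches the paper's approach exactly: the paper dispatches the corollary in a single line, remarking that it follows ``by virtue of Theorem \ref{E-U} and Theorem \ref{estimate-A-A*-martingale},'' which is precisely the substitution you describe (replacing Corollary \ref{cor-bp} by the $A,A^*$-estimates of Theorem \ref{estimate-A-A*-martingale} throughout the proof of Theorem \ref{E-U}). Your write-up simply spells out in full what the paper leaves implicit.
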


\section{The stability of solutions to QSDE with Lipschitz condition}
\indent\indent
In this section, we shall prove that the $L^p$-solution to the equation \eqref{nonlocal condition-QSDE} is stable, namely, small changes in the initial condition and in the coefficients $F,\ G,\ H$ and $R$  lead to small changes in the solution on $[t_0,T]$.

Let the coefficients $F,\ G$, $H$ of the equation \eqref{nonlocal condition-QSDE} satisfy Lipsctitz condition, i.e.\\
\textbf{(A3')} For any $x_1,\ x_2\in L^p(\mathscr{C})$ and a.e. $t\in [0,T]$, there exists a constant $L> 0$ such that
$$
   \|F(x_1, t)-F(x_2, t)\|_p^2+\|G(x_1, t)-G(x_2, t)\|_p^2+ \|H(x_1, t)-H(x_2, t)\|_p^2\leq L\|x_1-x_2\|_p^2 .$$

Let $\{X_t\}_{t\geq t_0}$, $\{Y_t\}_{t\geq t_0}$ be the $L^p$-solution to the equation \eqref{nonlocal condition-QSDE}
with initial conditions $X_{t_0}=Z+R(X)$ and $Y_{t_0}=Z^{'}+R(Y)$ for any $X_{t_0},\ Y_{t_0}\in L^p(\mathscr{C}_{t_0})$, respectively.
That is,
$$X_t=Z+R(X_t)+\int_{t_0}^tF(X_s,s)dW_s+\int_{t_0}^tdW_s G(X_s,s)+\int_{t_0}^tH(X_s,s)ds,\ \textrm{ a.s.\ in}\ [t_0,T],$$
and
$$Y_t=Z^\prime+R(Y_t)+\int_{t_0}^tF(Y_s,s)dW_s+\int_{t_0}^tdW_s G(Y_s,s)+\int_{t_0}^tH(Y_s,s)ds,\ \textrm{ a.s.\ in}\ [t_0,T].$$

\begin{thm}\label{Stability}
Suppose that assumptions (A1),(A2),(A3'),(A4) hold. With the above notations, for any $\varepsilon>0$, 
there exists $\delta>0$ such that if $\|Z-Z^{'}\|_p<\delta$, then $\|X_t-Y_t\|_p<\varepsilon$ holds for all $t_0\leq t\leq T$.
\end{thm}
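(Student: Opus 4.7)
The plan is to imitate the uniqueness argument from Step 2 of Theorem \ref{E-U}, but now starting from different initial data $Z$ and $Z'$, and then close the estimate with Gronwall instead of Bihari (since Lipschitz continuity is now in force). First I would subtract the two integral equations to obtain
\begin{equation*}
X_t-Y_t=(Z-Z')+(R(X_t)-R(Y_t))+\int_{t_0}^{t}\!(F(X_s,s)-F(Y_s,s))dW_s+\int_{t_0}^{t}\!dW_s(G(X_s,s)-G(Y_s,s))+\int_{t_0}^{t}\!(H(X_s,s)-H(Y_s,s))ds.
\end{equation*}
Taking the $L^p$-norm and applying the triangle inequality, Corollary \ref{cor-bp} (which controls both the right and the left stochastic integrals via Lemma \ref{L-R}), H\"older's inequality for the drift, and (A4) for the nonlocal term, I obtain
\begin{equation*}
(1-C(R))\|X_t-Y_t\|_p\leq \|Z-Z'\|_p+C(p)\Big(\int_{t_0}^{t}\!\|F(X_s,s)-F(Y_s,s)\|_p^2ds\Big)^{1/2}+\cdots,
\end{equation*}
where the contraction constant of $R$ is absorbed to the left-hand side, exactly as in Step 1 of the existence proof.

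Next I would square both sides, use $(a+b+c+d)^2\leq 4(a^2+b^2+c^2+d^2)$, and invoke the Lipschitz assumption (A3\textprime) to replace each of the three integrand norms $\|F(X_s,s)-F(Y_s,s)\|_p^2$, $\|G(X_s,s)-G(Y_s,s)\|_p^2$, $\|H(X_s,s)-H(Y_s,s)\|_p^2$ by $L\|X_s-Y_s\|_p^2$. This produces an inequality of the form
\begin{equation*}
\|X_t-Y_t\|_p^2\leq \frac{4}{(1-C(R))^2}\|Z-Z'\|_p^2+K(p,L,R,T)\int_{t_0}^{t}\|X_s-Y_s\|_p^2\,ds,\qquad t\in[t_0,T],
\end{equation*}
for some explicit constant $K(p,L,R,T)$.

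At this point the classical Gronwall inequality (the special case of Lemma \ref{Bihari's inequality} with $\rho(r)=r$) yields
\begin{equation*}
\|X_t-Y_t\|_p^2\leq \frac{4}{(1-C(R))^2}\|Z-Z'\|_p^2\exp\!\big(K(p,L,R,T)(T-t_0)\big),\qquad t\in[t_0,T].
\end{equation*}
Given $\varepsilon>0$, choose
\begin{equation*}
\delta:=\frac{\varepsilon(1-C(R))}{2}\exp\!\big(-K(p,L,R,T)(T-t_0)/2\big),
\end{equation*}
and the estimate $\|X_t-Y_t\|_p<\varepsilon$ for all $t\in[t_0,T]$ follows whenever $\|Z-Z'\|_p<\delta$. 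The only subtle step is the absorption $(1-C(R))\|X_t-Y_t\|_p$, which requires the strict contractivity $C(R)<1$ from (A4); once this is done the remainder is routine. Since the bound is uniform in $t$, the stability is in fact with respect to the $C_{\mathbb A}(t_0,T;L^p(\mathscr C_T))$-norm, not merely pointwise in $t$.
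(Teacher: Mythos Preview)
Your proposal is correct and follows essentially the same route as the paper: subtract the equations, absorb the $R$-term using $C(R)<1$, square and apply (A3\textprime) together with Corollary~\ref{cor-bp} to reach the same inequality $\|X_t-Y_t\|_p^2\leq \frac{4}{(1-C(R))^2}\|Z-Z'\|_p^2+K\int_{t_0}^t\|X_s-Y_s\|_p^2\,ds$, then close with Gronwall. The paper is terser (it simply refers back to the computations in Theorem~\ref{E-U}) and does not write out the explicit $\delta$, but the argument is identical.
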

\begin{proof}
By the directly calculation,
\begin{align*}
\|X_t-Y_t\|_p&\leq\|Z-Z^\prime\|_p+\|R(X_t)-R(Y_t)\|_p+\left\|\int_{t_0}^t(F(X_s,s)-F(Y_s,s))dW_s\right\|_p\\
&\indent+\left\|\int_{t_0}^t dW_s(G(X_s,s)-G(Y_s,s))\right\|_p+\left\|\int_{t_0}^t(H(X_s,s)-H(Y_s,s))ds\right\|_p.
\end{align*}
According to the proof of Theorem \ref{E-U} again and $(A3')$, for any $t\in[t_0, T]$, one gains the following estimate
\begin{align*}
\|X_t-Y_t\|_p^2\leq &\frac{4}{(1-C(R))^2}\|Z-Z^{'}\|_p^2+C'(p,R,T)\Bigg(\int_{t_0}^t\|F(X_s,s)-F(Y_s,s)\|_p^2ds\\
&+\int_{t_0}^t\|G(X_s,s)-G(Y_s,s)\|_p^2ds+\int_{t_0}^t\|H(X_s,s)-H(Y_s,s)\|_p^2ds\Bigg)\\
\leq&\frac{4}{(1-C(R))^2}\|Z-Z^{'}\|_p^2+C(p,T,R,L)\int_{t_0}^t\|X_s-Y_s\|_p^2ds,
\end{align*}
where $C(p,T,R,L)=C'(p,R,T)L=\frac{4}{(1-C(R))^2}\max\{C^2(p), C^2(T)\}L$.

By Gronwall's inequality,
$$
\|X_t-Y_t\|_p^2\leq \frac{4}{(1-C(R))^2} e^{C(p,T,R,L)(t-t_0)}\|Z-Z^{'}\|_p^2,
$$
for all $t\in[t_0, T]$, and the desired result is obtained. 
\end{proof}

In a similar manner, we establish stability theorems for the connection between coefficient convergence and solution convergence under Lipschitz condition.
\begin{thm}\label{Stable-coe}
Let assumptions (A1),(A2),(A3'),(A4) hold with $F,\ G,\ H,\ R$ being replaced respectively by $F_n,\ G_n$, $ H_n,\ R_n$, for all $n=1,\ 2, \cdots$ and $W_t$ be as in the equation \eqref{nonlocal condition-QSDE}.
Assume that
$F_n\rightarrow F,\ G_n\rightarrow G,\ H_n\rightarrow H$ in $L^p(\mathscr{C})$ as $n\to\infty$, uniformly on $L^p(\mathscr{C})\times[t_0,T]$, $R_n\rightarrow R$ in $L^p(\mathscr{C})$ uniformly as $n\to\infty$ on $L^p(\mathscr{C})$, and $Z_n\rightarrow Z$ in $L^p(\mathscr{C}_{t_0})$.
Furthermore, let $X(\cdot),\ X_n(\cdot)\in C_\mathbb{A}(t_0,T; L^p(\mathscr{C}_T))$ be solutions to the equation \eqref{nonlocal condition-QSDE} corresponding to $F,\ G,\ H,\ R,\ Z$ and $F_n,\ G_n,\ H_n,\ R_n,\ Z_n$, respectively. Then $X_n(t)\rightarrow X(t)$ in $L^p(\mathscr{C})$ uniformly on compact set $[t_0,T]$.
\end{thm}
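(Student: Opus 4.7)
\textbf{Proof proposal for Theorem \ref{Stable-coe}.} The plan is to imitate the stability proof of Theorem \ref{Stability}, but with two sources of error: the initial data discrepancy $Z_n-Z$ and the uniform coefficient gaps $F_n-F$, $G_n-G$, $H_n-H$, $R_n-R$. The main mechanism is a Gronwall-type inequality on $\|X_n(t)-X(t)\|_p^2$ where all the coefficient-convergence data are bundled into a single vanishing constant, and the Lipschitz part produces the usual integral self-reference.

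First I would subtract the two integral equations and apply the triangle inequality term by term. For the nonlocal piece I write
\begin{equation*}
\|R_n(X_n(t))-R(X(t))\|_p\leq \sup_{x\in L^p(\mathscr{C})}\|R_n(x)-R(x)\|_p+C(R)\|X_n(t)-X(t)\|_p,
\end{equation*}
so that the $C(R)$-term can be absorbed on the left, exactly as in Theorem \ref{Stability}. For each of the three integral terms I insert an intermediate $\pm F(X_n(s),s)$ (and similarly for $G$, $H$) and split
\begin{equation*}
F_n(X_n(s),s)-F(X(s),s)=\bigl(F_n(X_n(s),s)-F(X_n(s),s)\bigr)+\bigl(F(X_n(s),s)-F(X(s),s)\bigr),
\end{equation*}
and analogously for $G$ and $H$. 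The first summand is controlled uniformly in $s$ by $\epsi_n^F:=\sup_{x,s}\|F_n(x,s)-F(x,s)\|_p$, which tends to $0$ by assumption, while the second is controlled by the Lipschitz constant $L$ of (A3') times $\|X_n(s)-X(s)\|_p$.

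Next I would apply Corollary \ref{cor-bp} (together with its left-stochastic analogue from Lemma \ref{L-R}) to the two stochastic integrals and H\"older's inequality to the drift integral, exactly as in Step 2 of the proof of Theorem \ref{E-U}. Squaring, using the elementary inequality $(a_1+\cdots+a_5)^2\leq 5(a_1^2+\cdots+a_5^2)$, and absorbing the $C(R)$ part into the left-hand side yields a constant $C=C(p,R,T,L)>0$ and a vanishing sequence $\eta_n:=\|Z_n-Z\|_p^2+\sup_x\|R_n(x)-R(x)\|_p^2+(\epsi_n^F)^2+(\epsi_n^G)^2+(\epsi_n^H)^2\to 0$ such that
\begin{equation*}
\|X_n(t)-X(t)\|_p^2\leq C\,\eta_n+C\int_{t_0}^t\|X_n(s)-X(s)\|_p^2\,ds,\qquad t\in[t_0,T].
\end{equation*}
Gronwall's inequality then gives $\|X_n(t)-X(t)\|_p^2\leq C\eta_n\,e^{C(T-t_0)}$ uniformly in $t\in[t_0,T]$, and letting $n\to\infty$ finishes the proof.

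The only delicate bookkeeping step is the splitting of $R_n(X_n(t))-R(X(t))$ and of the three coefficient integrals into a uniform-convergence part plus a Lipschitz part; once that is done, everything reduces to the same Burkholder--Gundy plus Minkowski estimate used in Theorems \ref{E-U} and \ref{Stability}. I do not expect a real obstacle: the non-Lipschitz Osgood hypothesis (A3) is strengthened to the Lipschitz condition (A3'), so the Bihari lemma can be replaced by classical Gronwall, and the linear growth control on the stochastic integrals is immediate from Corollary \ref{cor-bp}.
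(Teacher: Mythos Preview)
Your proposal is correct and follows exactly the route the paper intends: the paper does not write out a separate proof for Theorem \ref{Stable-coe} but merely prefaces it with ``In a similar manner'' to Theorem \ref{Stability}, and your argument is precisely that similar manner---split each coefficient difference into a uniform-convergence part and a Lipschitz part, absorb the $C(R)$ contraction term on the left, apply Corollary \ref{cor-bp} and H\"older, square, and close with Gronwall. One small point of bookkeeping: the hypothesis literally places (A3') and (A4) on $F_n,G_n,H_n,R_n$, not on the limits $F,G,H,R$, so when you invoke the Lipschitz bound on $F(X_n(s),s)-F(X(s),s)$ and the contraction bound $C(R)$ you are tacitly assuming the limits inherit these properties (which they do, by uniform convergence, provided the constants $L$ and $C(R_n)$ are uniform in $n$); alternatively you can split the other way, e.g.\ $F_n(X_n,s)-F_n(X,s)+F_n(X,s)-F(X,s)$, and use the Lipschitz constant of $F_n$ directly.
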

Likewise, by Lemma \ref{estimate-A-A*-martingale} and Corollary \ref{E-U-A-A*}, we could get the following result.
\begin{cor}
Suppose that assumptions (A1),(A2),(A3'),(A4) hold. Then the solution $X(\cdot)\in C_\mathbb{A}(t_0,T;L^p(\mathscr{C}_T)$ to the equation \eqref{SDE-2} is stable on $[t_0,T]$ when nonlocal condition $X_{t_0}=Z+R(X)\in L^p(\mathscr{C}_{t_0})$ and the coefficients change slightly, respectively.
\end{cor}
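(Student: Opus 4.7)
The plan is to follow the same template as the uniqueness part of Theorem \ref{E-U} and the stability Theorem \ref{Stability}, but with extra bookkeeping to separate coefficient-convergence errors from solution-difference errors. First, I would subtract the two integral equations satisfied by $X_n(\cdot)$ and $X(\cdot)$, obtaining
\[
X_n(t)-X(t)=(Z_n-Z)+[R_n(X_n(t))-R(X(t))]+\int_{t_0}^{t}[F_n(X_n(s),s)-F(X(s),s)]dW_s+(\text{two analogous integrals}).
\]
For each pair of coefficients I would use the add-and-subtract trick, always inserting the limiting coefficient evaluated at $X_n$ (rather than $F_n$ evaluated at $X$). For example,
\[
F_n(X_n(s),s)-F(X(s),s)=[F_n(X_n(s),s)-F(X_n(s),s)]+[F(X_n(s),s)-F(X(s),s)].
\]
The first bracket is bounded uniformly in $(x,s)$ by $\varepsilon_n^F:=\sup_{x,s}\|F_n(x,s)-F(x,s)\|_p\to 0$, while the second is controlled by $L^{1/2}\|X_n(s)-X(s)\|_p$ via (A3') applied to $F$. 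The same splitting for $G,H,R$ isolates coefficient errors $\varepsilon_n^G,\varepsilon_n^H,\varepsilon_n^R\to 0$ from a term $C(R)\|X_n(t)-X(t)\|_p$, where the constant $C(R)<1$ comes from (A4) for the limiting $R$ (not the $R_n$).

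Next, I would apply Corollary \ref{cor-bp} to each of the two stochastic integrals and H\"older's inequality to the drift integral, then use $(a_1+\dots+a_5)^2\leq 5(a_1^2+\dots+a_5^2)$ and absorb the $C(R)\|X_n(t)-X(t)\|_p$ term onto the left-hand side with factor $(1-C(R))^{-2}$. Setting $\varepsilon_n^2:=(\varepsilon_n^F)^2+(\varepsilon_n^G)^2+(\varepsilon_n^H)^2+(\varepsilon_n^R)^2$, this yields an inequality of the form
\[
\|X_n(t)-X(t)\|_p^2\leq C_1\bigl(\|Z_n-Z\|_p^2+(T-t_0)\varepsilon_n^2\bigr)+C_2\int_{t_0}^{t}\|X_n(s)-X(s)\|_p^2\,ds,\quad t\in[t_0,T],
\]
where $C_1,C_2$ depend on $p,T,C(R),L$ but not on $n$. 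Gronwall's inequality then produces
\[
\|X_n(t)-X(t)\|_p^2\leq C_1\bigl(\|Z_n-Z\|_p^2+(T-t_0)\varepsilon_n^2\bigr)e^{C_2(T-t_0)},\qquad t\in[t_0,T],
\]
and the right-hand side is independent of $t$ and tends to $0$ as $n\to\infty$ by the hypotheses, giving the desired uniform convergence on $[t_0,T]$.

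The main obstacle is the $R$-term, not the stochastic integrals: since (A4) for each $R_n$ only guarantees an individual contraction constant $C(R_n)<1$ and nothing in the hypotheses provides a uniform bound $\sup_n C(R_n)<1$, one cannot simply mimic the absorption argument from Theorem \ref{E-U} with $C(R_n)$. The key observation is that by splitting via $R(X_n(t))$ instead of $R_n(X(t))$, the only Lipschitz constant that appears is $C(R)$, which is uniformly $<1$ by assumption; the remaining piece $R_n(X_n(t))-R(X_n(t))$ is absorbed into $\varepsilon_n^R$ by the uniform convergence of $R_n$ to $R$. Once this splitting is in place, the rest reduces to the stability computation of Theorem \ref{Stability}.
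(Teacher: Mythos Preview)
Your argument is essentially the paper's intended one: the paper does not spell out a proof of this corollary and simply remarks that it follows ``likewise'' from the $A/A^*$ martingale inequalities (Theorem~\ref{estimate-A-A*-martingale}) together with Corollary~\ref{E-U-A-A*}, i.e.\ by rerunning the computations of Theorems~\ref{Stability} and~\ref{Stable-coe} with the integrators $dA_s$ and $dA^*_s$ in place of $dW_s$. Your add-and-subtract splitting, the absorption of $C(R)\|X_n(t)-X(t)\|_p$ to the left using only the limiting $R$, and the final Gronwall step are exactly what is needed, and your observation about avoiding any uniform bound on $C(R_n)$ is the right way to handle the nonlocal term.

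One correction: the statement concerns equation~\eqref{SDE-2}, whose stochastic integrals are $\int F\,dA$ and $\int dA^*\,G$, not $\int F\,dW$. Accordingly you should invoke Theorem~\ref{estimate-A-A*-martingale} rather than Corollary~\ref{cor-bp} to bound the stochastic integrals; the resulting inequalities \eqref{estimate-A-A*} have the same form $\|\cdot\|_p\le\beta_p(\int\|\cdot\|_p^2)^{1/2}$, so after this substitution your argument goes through verbatim.
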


\section{The Self-adjointness and Markov Property }
\indent\indent
In this section, we consider the self-adjointness and Markov property of $L^p$-solutions to QSDE \eqref{SDE} with nonlocal conditions under non-Lipschitz coefficients for $p>2$. 

According to the description of parity in Section 2, we get the following lemma.
\begin{lem}\label{S-A1}
Let 
$F:[0,t]\rightarrow L^p(\mathscr{C})$ be adapted and satisfy $\int_{0}^t\|F(s)\|_p^2ds<\infty$.
 Suppose further that $F(s)=F(s)^{*}\in L^p(\mathscr{C}_e)$ for each $s\in [0,t]$. Then
$\int_{0}^tF(s)dW_s$ is self-adjoint element of $L^p(\mathscr{C})$, and $\int_{0}^tF(s)dW_s=\int_{0}^tdW_s F(s)$.
\end{lem}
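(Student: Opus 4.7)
The plan is to establish both conclusions first for simple even self-adjoint processes by direct computation using Lemma~\ref{exchange-Brownian-definite parity}, and then pass to the general case by approximation, relying on Corollary~\ref{cor-bp} and Lemma~\ref{L-R} for continuity of the right and left stochastic integrals.

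\emph{Simple case.} Suppose first that $F=\sum_{k=0}^{n-1}F(t_k)\chi_{[t_k,t_{k+1})}$ with each $F(t_k)\in L^p(\mathscr{C}_{t_k})$ even and self-adjoint. Since $F(t_k)$ has even parity and lies in $L^p(\mathscr{C}_{t_k})$, Lemma~\ref{exchange-Brownian-definite parity} (applied with $t_0=t_1=t_k$, $t_2=t_{k+1}$) yields
\[
F(t_k)(W_{t_{k+1}}-W_{t_k}) = (W_{t_{k+1}}-W_{t_k})F(t_k),
\]
so summing in $k$ gives $\int_0^t F(s)\,dW_s = \int_0^t dW_s\,F(s)$. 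Moreover, because $J\chi_{[0,t]}=\chi_{[0,t]}$, the Fermion Brownian motion satisfies $W_t^*=W_t$, hence each increment $W_{t_{k+1}}-W_{t_k}$ is self-adjoint. Combining this with $F(t_k)^*=F(t_k)$ and the commutation identity,
\[
\left(\int_0^t F(s)\,dW_s\right)^{\!*} = \sum_{k=0}^{n-1}(W_{t_{k+1}}-W_{t_k})F(t_k) = \int_0^t dW_s\,F(s) = \int_0^t F(s)\,dW_s,
\]
which gives self-adjointness in the simple case.

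\emph{Passage to the general case.} For general $F$ with $\int_0^t\|F(s)\|_p^2\,ds<\infty$, choose any simple adapted approximation $\tilde F_n\to F$ in $L^2_{\mathbb{A}}(0,t;L^p(\mathscr{C}_t))$ and symmetrize by setting
\[
F_n(s) = \tfrac{1}{4}\bigl(\tilde F_n(s) + \tilde F_n(s)^* + P\tilde F_n(s) + (P\tilde F_n(s))^*\bigr).
\]
Since $P$ is an $L^p$-isometric $*$-automorphism that commutes with the adjoint (as noted in the discussion following Lemma~\ref{exchange-Brownian-definite parity}), each $F_n(s)$ is even and self-adjoint, and applying the same projection to $F$ returns $F$; hence $F_n\to F$ in $L^2_{\mathbb{A}}(0,t;L^p(\mathscr{C}_t))$. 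Corollary~\ref{cor-bp} and Lemma~\ref{L-R} then yield $\int_0^t F_n(s)\,dW_s\to\int_0^t F(s)\,dW_s$ and $\int_0^t dW_s\,F_n(s)\to\int_0^t dW_s\,F(s)$ in $L^p(\mathscr{C})$. Passing the two identities obtained in the simple case to the limit, together with continuity of the adjoint on $L^p(\mathscr{C})$, completes the proof.

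\emph{Main obstacle.} The genuine content is the commutation step, which hinges entirely on $F(s)$ having even parity via Lemma~\ref{exchange-Brownian-definite parity}; everything else is a routine approximation. The only point deserving mild care is producing simple approximations that are simultaneously even and self-adjoint without losing $L^2_{\mathbb{A}}$-convergence, which is arranged by averaging against the commuting $L^p$-isometries $P$ and $\ast$.
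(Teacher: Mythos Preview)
Your proof is correct and follows essentially the same approach as the paper: both reduce to the simple case, use Lemma~\ref{exchange-Brownian-definite parity} for the commutation $F(t_k)(W_{t_{k+1}}-W_{t_k})=(W_{t_{k+1}}-W_{t_k})F(t_k)$, and invoke Corollary~\ref{cor-bp} for the $L^p$ statement. The only difference is that the paper simply asserts ``it is sufficient to consider the case that $F(s)$ is simple with values in $\mathscr{E}$'' without detailing the approximation, whereas you explicitly construct even self-adjoint simple approximants via the averaging $\tfrac{1}{4}(\tilde F_n+\tilde F_n^*+P\tilde F_n+(P\tilde F_n)^*)$ and justify the limit using Lemma~\ref{L-R} for the left integral---this extra care is a welcome addition rather than a different route.
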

\begin{proof}
It is sufficient to consider the case that $F(s)$ is simple with values in $\mathscr{E}$ for any $s\in[0,t]$. Since $F$ is simple,
$F(s)=\sum\limits_{k=0}^{n-1}F(t_k)\chi_{[t_k,t_{k+1})}(s)$ and $$\int_{0}^tF(s)dW_s=\sum_{k=0}^{n-1}F(t_k)(W_{t_{k+1}}-W_{t_{k}}),$$
where $\{t_k\}_{k=0}^n$ is a partition of $[0,t]$.
On the other hand, $F(s)=F(s)^*$ and $W_s$ is hermitian,
$$
\begin{aligned}
\left(\int_{0}^tF(s)dW_s\right)^{*}&=\left(\sum_{k=0}^{n-1}F(t_k)(W_{t_{k+1}}-W_{t_{k}})\right)^{*}\\
&=\sum_{k=0}^{n-1}\left(W_{t_{k+1}}-W_{t_{k}}\right)^{*}F(t_k)^{*}\\
&=\sum_{k=0}^{n-1}\left(W_{t_{k+1}}-W_{t_{k}}\right)F(t_k)\\
&=\int_{0}^tdW_s F(s).
\end{aligned}
$$
By Lemma \ref{exchange-Brownian-definite parity},  
$$\left(W_{t_{k+1}}-W_{t_{k}}\right)F(t_k)=F(t_k)\left(W_{t_{k+1}}-W_{t_{k}}\right),\ \forall\ F(t_k)\in L^p(\mathscr{C}_e).$$
Then $\int_{0}^t dW_s F(s)=\int_{0}^tF(s)dW_s $.
Since $\int_{0}^t\|F(s)\|_p^2ds<\infty$, $\int_0^tF(s)dW_s$ is self-adjoint element in $L^p(\mathscr{C})$ by virtue of Corollary \ref{cor-bp}.
\end{proof}
Let $L^p(\mathscr{C})_{sa}$ denote the self-adjoint part of $L^p(\mathscr{C})$.
Let $F_i,\ G_i:\mathbb{R}\to \mathbb{R},$ for $i=1,2$, and satisfy $\int|t||\widehat{F_i}(t)|dt<\infty,\ \int|t||\widehat{G}_i(t)|dt<\infty$.
Suppose that $F_i,\ G_i:L^p(\mathscr{C})_{sa}\to L^p(\mathscr{C})$ are adapted and satisfy the Osgood condition of Assumption \ref{Assump} on $L^p(\mathscr{C})$, and each $F_i$ is an even function.  
Set $$\widetilde{F}_i(h)=F_i(h_o),\   \widetilde{G}_i(h)=G(h_e),\ \forall\ h\in L^p(\mathscr{C})_{sa}. $$ 
Evidently, $\widetilde{F}_i(h),\ \widetilde{G}_i(h)$ ($i=1,2$) are even by Lemma 4.1 of \cite{BSW2} for any $h\in L^p(\mathscr{C})_{sa}$.
Let $$\widetilde{\Phi}_i=\widetilde{F}_i+\widetilde{G}_i,\  i=1,2.$$
It can be seen that $\widetilde{\Phi}_i$ satisfies the Osgood conditions and maps self-adjoint elements of $L^p(\mathscr{C})$ into self-adjoint elements of $L^p(\mathscr{C}_e)$.
Then we obtain the self-adjointness of the solutions to QSDE with nonlocal conditions.
\begin{thm}\label{Self-adjointness}
Let $\widetilde{\Phi}_1,\ \widetilde{\Phi}_2$ be as above. Let $\widetilde{H}:\mathbb{R}\to \mathbb{R}$ satisfy $\int|t||\widehat{\widetilde{H}}(t)|dt<\infty$, and $\widetilde{H}:L^p(\mathscr{C})_{sa}\rightarrow L^p(\mathscr{C})$ be adapted and satisfy assumption $(A3)$ on $L^p(\mathscr{C})$ in Assumption \ref{Assump}. Furthermore, $\widetilde{R}:L^p(\mathscr{C})_{sa}\rightarrow L^p(\mathscr{C})_{sa}$ satisfies assumption $(A4)$  in Assumption \ref{Assump}. Then, for any $Z=Z^*$, there is a unique self-adjoint, adapted, $L^p$-continuous solution $\{X_t\}_{t\geq t_0}$ to the following QSDE
\begin{equation}\label{selfadjointness-QSDE}
dX_t=\widetilde{\Phi}_1(X_t)dW_t+dW_t\widetilde{\Phi}_2(X_t)+\widetilde{H}(X_t)dt
\end{equation}
on $[t_0,T]$ with $X_{t_0}=Z+\widetilde{R}(X)\in L^p(\mathscr{C}_{t_0})$ provided $0<C(\widetilde{R})<1$.
\end{thm}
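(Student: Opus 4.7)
The plan is to deduce existence and uniqueness directly from Theorem~\ref{E-U} applied to the coefficients $\widetilde{\Phi}_1,\widetilde{\Phi}_2,\widetilde{H},\widetilde{R}$, and then to establish self-adjointness by an induction showing that the Picard iteration from the proof of Theorem~\ref{E-U} preserves self-adjointness at every step. Since the involution is isometric on $L^p(\mathscr{C})$, the $L^p$-limit then automatically lies in $L^p(\mathscr{C})_{sa}$, and the uniqueness clause of Theorem~\ref{E-U} identifies this self-adjoint limit with the solution.

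First I would check that the hypotheses of Theorem~\ref{E-U} are met: adaptedness and the Osgood estimate hold for $\widetilde{\Phi}_1,\widetilde{\Phi}_2$ by construction (these are assumed in the setup preceding the theorem) and for $\widetilde{H}$ by hypothesis, while the $L^p$-continuity in $s$ for fixed argument follows from the Fourier transform integrability $\int|t||\widehat{F_i}(t)|dt<\infty$, etc., via the standard Bochner-type functional calculus. Assumption (A4) for $\widetilde{R}$ is given. Hence a unique adapted $L^p$-continuous solution $\{X_t\}_{t\geq t_0}$ on $[t_0,T]$ exists.

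Next, consider the iterates
\begin{equation*}
X_t^{(n+1)} = Z + \widetilde{R}(X_t^{(n+1)}) + \int_{t_0}^t \widetilde{\Phi}_1(X_s^{(n)})\,dW_s + \int_{t_0}^t dW_s\,\widetilde{\Phi}_2(X_s^{(n)}) + \int_{t_0}^t \widetilde{H}(X_s^{(n)})\,ds,
\end{equation*}
with $X_t^{(0)}\equiv Z$. I claim $X_t^{(n)}\in L^p(\mathscr{C})_{sa}$ for every $n$, by induction. Assuming $X_s^{(n)}$ is self-adjoint, the setup of the theorem guarantees $\widetilde{\Phi}_i(X_s^{(n)})$ is self-adjoint and even for $i=1,2$, and $\widetilde{H}(X_s^{(n)})$ is self-adjoint (real functional calculus of a self-adjoint operator). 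Lemma~\ref{S-A1} then implies that $\int_{t_0}^t \widetilde{\Phi}_1(X_s^{(n)})dW_s$ equals $\int_{t_0}^t dW_s\,\widetilde{\Phi}_1(X_s^{(n)})$ and is self-adjoint, and analogously for the term involving $\widetilde{\Phi}_2$; the Bochner integral of self-adjoint integrands is self-adjoint. Writing
\begin{equation*}
\xi_t^{(n)} = Z + \int_{t_0}^t \widetilde{\Phi}_1(X_s^{(n)})\,dW_s + \int_{t_0}^t dW_s\,\widetilde{\Phi}_2(X_s^{(n)}) + \int_{t_0}^t \widetilde{H}(X_s^{(n)})\,ds \in L^p(\mathscr{C})_{sa},
\end{equation*}
the implicit equation for $X_t^{(n+1)}$ becomes the fixed-point problem $y = \xi_t^{(n)} + \widetilde{R}(y)$, which by hypothesis on $C(\widetilde{R})$ has a unique solution in $L^p(\mathscr{C})$ via Banach's theorem. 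Since $\widetilde{R}$ maps the closed subspace $L^p(\mathscr{C})_{sa}$ into itself and $\xi_t^{(n)}\in L^p(\mathscr{C})_{sa}$, the restriction of the contraction to $L^p(\mathscr{C})_{sa}$ has its fixed point there, so $X_t^{(n+1)}\in L^p(\mathscr{C})_{sa}$. Passing to the $L^p$-limit closes the argument.

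The main obstacle is the inductive step, specifically the self-adjointness of the sum of stochastic integrals. The crucial observation is that Lemma~\ref{S-A1} not only ensures each individual It\^{o}--Clifford integral is Hermitian but also identifies right and left integrals when the integrand is self-adjoint and even. This lets us treat the two noise terms $\int \widetilde{\Phi}_1 dW_s$ and $\int dW_s\widetilde{\Phi}_2$ independently, without having to couple $\widetilde{\Phi}_1$ and $\widetilde{\Phi}_2$ into a single symmetric expression---which would otherwise be the natural concern when taking adjoints of a combined quantum stochastic integral.
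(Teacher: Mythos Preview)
Your proposal is correct and follows essentially the same approach as the paper: invoke Theorem~\ref{E-U} for existence and uniqueness, then run the same Picard iteration and prove by induction that each $X_t^{(n)}$ is self-adjoint, using Lemma~\ref{S-A1} for the stochastic integrals and the fact that $\widetilde{\Phi}_i$ lands in $L^p(\mathscr{C}_e)\cap L^p(\mathscr{C})_{sa}$. Your handling of the implicit step---writing $X_t^{(n+1)}$ as the fixed point of $y\mapsto \xi_t^{(n)}+\widetilde{R}(y)$ and noting that this contraction preserves the closed subspace $L^p(\mathscr{C})_{sa}$---is in fact more careful than the paper's, which simply asserts that $\widetilde{R}(X_t^{(n+1)})$ is self-adjoint without isolating the fixed-point reasoning.
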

\begin{proof}
Since $\widetilde{\Phi}_1,\ \widetilde{\Phi}_2,\ \widetilde{H}$ satisfy the Osgood condition and $\widetilde{R}$ satisfy the Lipschitz condition as in Assumption \ref{Assump},
it follows from Theorem \ref{E-U} that the equation \eqref{selfadjointness-QSDE} admits a unique solution $X(\cdot)\in C_\mathbb{A}(t_0,T;L^p(\mathscr{C}_T))$ such that
$$X_t=Z+\widetilde{R}(X_t)+\int_{t_0}^t\widetilde{\Phi}_1(X_s)dW_s+\int_{t_0}^{t}dW_s\widetilde{\Phi}_2(X_s)+\int_{t_0}^{t}\widetilde{H}(X_s)ds,\ \textrm{a.s.\ in}\ [t_0,T].$$

Next, it is enough to prove the self-adjointness of the solution to the equation \eqref{selfadjointness-QSDE}.
We can define the following equation inductively with $X_{t_0}=Z+\widetilde{R}(X)$,
\begin{equation}\label{self}
X_t^{(n+1)}=Z+\widetilde{R}(X_t^{(n+1)})+\int_{t_0}^t\widetilde{\Phi}_1(X_s^{(n)})dW_s+\int_{t_0}^{t}dW_s\widetilde{\Phi}_2(X_s^{(n)})+\int_{t_0}^{t}\widetilde{H}(X_s^{(n)})ds.
\end{equation}
To prove the self-adjointness of $X_t$, it is sufficient to show that  $X_t^{(n+1)}$ is self-adjoint by induction for all $n\geq0$. It is obvious that $X_t^{(1)}$ is self-adjoint since $Z=Z^*$. Assume that $X_t^{(n)}$ is self-adjoint, then $\widetilde{\Phi}_i(X_s^{(n)})\in L^p(\mathscr{C}_e)\cap L^p(\mathscr{C}_s)_{sa}$. By Lemma \ref{S-A1}, $\int_{t_0}^t\widetilde{\Phi}_1(X_s^{(n)})dW_s$ and $\int_{t_0}^{t}dW_s\widetilde{\Phi}_2(X_s^{(n)})$ are self-adjoint. In addition, $\int_{0}^{t}H(X^{(n)}_s)ds$ and $\widetilde{R}(X_s^{(n+1)})$ are also self-adjoint. Hence $X_t^{(n+1)}$ is self-adjoint.
\end{proof}

This result of self-adjoint of solutions is the basis of studying optimal control problem of QSDE.
Apart from this, we also obtain the following Markov property of solutions to the equation \eqref{nonlocal condition-QSDE} under non-Lipschitz coefficients consistent with Theorem 2.2, Corollary 2.3 and Corollary 2.4 of \cite{BSW3}. 

For any interval $\textit{I}\subseteq[t_0, \infty)$, let $\mathscr{A}_\textit{I}$ denote the $W^*$-algebra generated by $\mathds{1}$ and the solution $X_t$ to the equation \eqref{nonlocal condition-QSDE} for $t\in \textit{I}$, and write $\mathscr{A}_s$ for $\mathscr{A}_{[s,s]}$. Since the solution $X_t$ is adapted, i.e. $X_t\in L^p(\mathscr{C}_t)$ for all $t\geq t_0$, it follows that $\mathscr{A}_\textit{I}$ is a $W^*$-subalgebra of $\mathscr{C}_t$ whenever $\textit{I}\subseteq[t_0,t]$. Let $\mathscr{\tilde{A}}_\textit{I}=\mathscr{A}_\textit{I} \vee \beta(\mathscr{A}_\textit{I})$ be the $W^*$-subalgebra of $\mathscr{C}$ generated by $\mathscr{A}_\textit{I}$ and $\beta(\mathscr{A}_\textit{I})$. It is clear that $\beta(\mathscr{\tilde{A}}_\textit{I})=\mathscr{\tilde{A}}$ and $\mathscr{\tilde{A}}_s\subseteq \mathscr{C}_s$ for any $s\geq t_0$.

Next, we denote the algebra generated by field differences. Let $\mathscr{F}_s$ denote the $W^*$-subalgebra of $\mathscr{C}$ generated by the field differences $\{W_t-W_s:\ t_0 \leq s\leq t\}$, and $\mathscr{\tilde{A}}_s\vee \mathscr{F}_s$ be the $W^*$-subalgebra of $\mathscr{C}$ generated by $\mathscr{\tilde{A}}_s$ and $\mathscr{F}_s$. Thus, $\beta(\mathscr{\tilde{A}}_s\vee \mathscr{F}_s)=\mathscr{\tilde{A}}_s\vee \mathscr{F}_s$. Then, we get the following Markov property of the adapted solution $\{X_t\}_{t\geq t_0}$ to the equation \eqref{nonlocal condition-QSDE}.

\begin{thm}\label{Markov Theorem}
Let assumption \ref{Assump} hold and $\{X_t\}_{t\geq t_0}$ be an adapted, unique, continuous $L^p$-solution to the equation \eqref{nonlocal condition-QSDE}. Then $X_s\in L^p(\mathscr{\tilde{A}}_s\vee \mathscr{F}_s)$ for all $t_0\leq s\leq t$. Moreover, the process $\{X_t\}_{t\geq t_0}$ is a Markov process in the following sense: for any $s\geq t_0$ and $f\in L^{p}(\mathscr{\tilde{A}}_{[s,\infty)}),$ one has
\begin{equation}\label{Markov property}
 m(f|\mathscr{\tilde{A}}_{[t_0,s]})=m(f|\mathscr{\tilde{A}}_{s}),
\end{equation}
where $m(\cdot|\mathscr{B})$ denotes the conditional expectation with respect to the subalgebra $\mathscr{B}$ of $\mathscr{C}$.
\end{thm}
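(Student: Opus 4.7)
The plan is to prove both assertions through a Picard iteration restarted at the intermediate time $s$, followed by an independence argument. The first assertion is the workhorse: once we know $X_t \in L^p(\mathscr{\tilde{A}}_s \vee \mathscr{F}_s)$ for $t \geq s$, the Markov property follows from the fact that $\mathscr{F}_s$ is independent of $\mathscr{C}_s$ with respect to the vacuum state.

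First, subtracting \eqref{Iteration} at time $s$ from the same equation at $t$ yields
$$X_t = X_s + R(X_t) - R(X_s) + \int_s^t F(X_r,r)\,dW_r + \int_s^t dW_r\,G(X_r,r) + \int_s^t H(X_r,r)\,dr.$$
I would then set up the iteration $Y_t^{(0)} \equiv X_s$ and
$$Y_t^{(n+1)} = X_s + R(Y_t^{(n+1)}) - R(X_s) + \int_s^t F(Y_r^{(n)},r)\,dW_r + \int_s^t dW_r\,G(Y_r^{(n)},r) + \int_s^t H(Y_r^{(n)},r)\,dr,$$
where the implicit nonlocal term is resolved by Banach's fixed-point theorem exactly as in Step~1 of Theorem~\ref{E-U}. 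Since the coefficients satisfy (A3) and (A4), the argument of Step~2 of Theorem~\ref{E-U} --- using Corollary~\ref{cor-bp}, the contraction property of $R$, and Bihari's inequality (Lemma~\ref{Bihari's inequality}) --- shows $Y_t^{(n)} \to X_t$ in $L^p(\mathscr{C})$ uniformly on $[s,T]$. An induction on $n$ then shows $Y_t^{(n)} \in L^p(\mathscr{\tilde{A}}_s \vee \mathscr{F}_s)$: the base case uses $X_s \in L^p(\mathscr{\tilde{A}}_s)$, and in the inductive step the integrands $F(Y_r^{(n)},r), G(Y_r^{(n)},r), H(Y_r^{(n)},r)$ lie in $L^p(\mathscr{\tilde{A}}_s \vee \mathscr{F}_s)$ by adaptedness combined with the inductive hypothesis, while the Riemann-sum approximants of the stochastic and Lebesgue integrals consist of products of such integrands with increments $W_{r_{k+1}} - W_{r_k} \in \mathscr{F}_s$; the implicit term $R(Y_t^{(n+1)})$ stays in the same algebra because its defining Banach iterates do. The $L^p$-closedness of $L^p(\mathscr{\tilde{A}}_s \vee \mathscr{F}_s)$ then passes to the limit, giving $X_t \in L^p(\mathscr{\tilde{A}}_s \vee \mathscr{F}_s)$ and hence $\mathscr{\tilde{A}}_{[s,\infty)} \subseteq \mathscr{\tilde{A}}_s \vee \mathscr{F}_s$ ($\beta$-invariance of the right-hand side being automatic).

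For the Markov property, I would invoke the noncommutative independence of $\mathscr{F}_s$ from $\mathscr{C}_s$ relative to $m$: for every $b \in \mathscr{F}_s$ and every $W^*$-subalgebra $\mathscr{B} \subseteq \mathscr{C}_s$, one has $m(b \mid \mathscr{B}) = m(b)\mathds{1}$. Given $f \in L^p(\mathscr{\tilde{A}}_{[s,\infty)}) \subseteq L^p(\mathscr{\tilde{A}}_s \vee \mathscr{F}_s)$, I would approximate $f$ in $L^p$ by finite sums of monomials $ab$ with $a \in \mathscr{\tilde{A}}_s$ and $b \in \mathscr{F}_s$. The module property of conditional expectation, combined with this independence and with $\mathscr{\tilde{A}}_s \subseteq \mathscr{\tilde{A}}_{[t_0,s]}$, yields
$$m(ab \mid \mathscr{\tilde{A}}_{[t_0,s]}) = a\,m(b \mid \mathscr{\tilde{A}}_{[t_0,s]}) = a\,m(b)\mathds{1} = a\,m(b \mid \mathscr{\tilde{A}}_s) = m(ab \mid \mathscr{\tilde{A}}_s),$$
and the $L^p$-contractivity of conditional expectation extends the equality to all $f \in L^p(\mathscr{\tilde{A}}_{[s,\infty)})$.

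The main obstacle is the inductive step for the first assertion: one has to handle the implicit nonlocal term $R(Y_t^{(n+1)})$ carefully, showing that the Banach fixed-point construction preserves $L^p(\mathscr{\tilde{A}}_s \vee \mathscr{F}_s)$, and must justify closedness of this subalgebra under the $L^p$-limits of the stochastic-integral Riemann sums. The technical content reduces to Corollary~\ref{cor-bp} together with the fact that an $L^p$-norm limit of elements in the $L^p$-closure of a $W^*$-generated subalgebra stays in that closure, which is where the nonlocal feature of the equation interacts with the filtration structure.
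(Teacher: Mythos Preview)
The paper gives no detailed argument here, only the remark that the proof is ``similar to Theorem 2.2 of \cite{BSW3}.'' Your restarted Picard iteration at time $s$ followed by the independence/module-property argument is precisely the Barnett--Streater--Wilde strategy, and you have correctly threaded the nonlocal term $R$ through the construction via an inner Banach fixed point. In that sense your approach matches the paper's.

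There is, however, a genuine gap in your inductive step. You claim that ``$F(Y_r^{(n)},r), G(Y_r^{(n)},r), H(Y_r^{(n)},r)$ lie in $L^p(\mathscr{\tilde{A}}_s \vee \mathscr{F}_s)$ by adaptedness combined with the inductive hypothesis.'' Adaptedness in the sense of (A1) says only that $F(u,r)\in L^p(\mathscr{C}_r)$ whenever $u\in L^p(\mathscr{C}_r)$; it does \emph{not} force $F$ to preserve the much smaller $\beta$-invariant subalgebra $\mathscr{\tilde{A}}_s\vee\mathscr{F}_s$. A trivial counterexample is $F(u,r)=a_r$ for a fixed adapted process $a$: then $F(Y_r^{(n)},r)$ need not lie in $\mathscr{\tilde{A}}_s\vee\mathscr{F}_s$, and indeed the resulting solution is not Markov. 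The same objection applies to $R$. What the argument actually requires is a \emph{locality} hypothesis, namely that $F(u,t),G(u,t),H(u,t),R(u)$ belong to the $\beta$-invariant $W^*$-algebra generated by $u$; this holds automatically when the coefficients arise from Borel functional calculus (as in the self-adjoint setting of Theorem~\ref{Self-adjointness} and in the concrete examples of \cite{BSW2,BSW3}) but is not a consequence of (A1)--(A4). This missing hypothesis is already implicit in the paper's statement, so it is not a flaw in your strategy per se---but you should make the locality assumption explicit rather than invoking ``adaptedness,'' which does not carry the needed conclusion.
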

The proof of Theorem \ref{Markov Theorem} is similar to Theorem 2.2 of \cite{BSW3}.
Furthermore, the result for the solution  of the equation \eqref{SDE-2} also holds.
\section{Conclusion}
\indent\indent
Utilizing the Burkholder-Gundy inequality of Clifford $L^p$-martingale, we obtain the existence, uniqueness and stability of the solutions to QSDEs with nonlocal conditions for $p>2$. 
In addition, the acquisition of the self-adjoint solution pave the way for the next study on the optimal control problems of QSDE. 

\bibliographystyle{IEEEtran}

\end{document}